\newtheorem{teo}{Theorem}
\newtheorem{defi}[teo]{Definition}
\newtheorem{obs}[teo]{Remark}
\newtheorem{pro}[teo]{Proposition}
\newtheorem{cor}[teo]{Corollary}
\newtheorem{lem}[teo]{Lemma}
\newcommand{\spa}{\mathrm{span \,}}
\newcommand{\N}{\mathbb N}
\newcommand{\R}{\mathbb R}
\title{Non-ergodic Banach spaces are near Hilbert}
\author{W. Cuellar Carrera} \thanks{The author  was supported by FAPESP grant 2014/25900-7.}
\date{}
\address{Instituto de Matem\'atica e Estat\'istica, Universidade de S\~ao Paulo, R. do Mat\~ao 1010 SP-Brazil  \\ 
Email address:  \url{cuellar@ime.usp.br} }
\subjclass[2010]{Primary 46B20, 46B03, Secondary 03E15}
\keywords{}
\begin{document}
\maketitle
\begin{abstract} We prove that  a non  ergodic Banach space must be near Hilbert. In particular, $\ell_p$ ($2<p<\infty$) is ergodic.  This reinforces the  conjecture that  $\ell_2$ is the only non ergodic Banach space. As an application of our criterion for ergodicity, we prove that  there is no  separable Banach space which is complementably universal for the class of all subspaces of $\ell_p$, for $1\leq p <2$. This solves a question left open by  W. B. Johnson and  A. Szankowski in 1976.
\end{abstract}

\section{Introduction}
The solution of  Gowers \cite{Gd}  and  Komorowski--Tomczak-Jaegermann \cite{KTJ}  to the homogeneous Banach space problem,  provides that every Banach space  having  only one equivalence class for the relation of isomorphism between its infinite dimensional  subspaces must be isomorphic to $\ell_2$.  G. Godefroy  formulated the question  about  the number of non isomorphic subspaces of a Banach space  $X$ not isomorphic to $\ell_2$. 
This question was studied, in the context  of descriptive set theory, by
V. Ferenczi and C. Rosendal \cite{FR} who introduced  the notion of   {\it ergodic Banach space}   to study  the classification of the relative complexity  of the isomorphism relation between the subspaces of a separable  Banach space.

Our general reference for descriptive set theory will be the book  \cite{Ke}.
A {\it Polish space} is a separable topological space which admits a 
compatible complete metric. The  Borel sets of a Polish space is the $\sigma$-algebra generated by the  open sets. A set $X$ equipped with a $\sigma$-algebra  is called a {\it Borel standard space} if there exists a Polish topology  on $X$  for which  that $\sigma$-algebra  arises as  the collection of Borel subsets of $X$.   A function between two Borel standard spaces  $f: X \to Y$ is said to be  Borel, if $f^{-1}(B)$ is  Borel in $X$, for every  Borel subset $B\subseteq Y$.  

Given a  Polish space $X$,  let  $\mathcal F(X)$  be the collection of all closed subsets of $X$. The $\sigma$-algebra on $\mathcal F(X)$ generated by   
$$ A_{U}=\{ F\in  \mathcal F(X)\, : \, F\cap U\neq \emptyset \},
$$
where $U$ is an open subset of $X$, is called the  {\it Effros Borel structure} on the closed subsets of $X$. It is not hard to see that $\mathcal F(X)$ equipped  with this  Borel structure is a Borel standard space.  $\mathcal {SB}(X)$  denotes  the collection of  infinite dimensional linear subspaces $Y\in \mathcal F(X)$  equipped  with the relative Effros Borel structure.  This  framework allows us  to identify every class of subspaces of  a Banach space $X$  with a subset of $\mathcal{SB}(X)$ in which its complexity can be measured. Since $C(2^\N)$ is universal for all separable Banach spaces,  we have that properties of separable Banach spaces  becomes sets in $\mathcal{SB}(C(2^\N))$.   In \cite{B} it was  proved that the relation of isomorphism between separable Banach spaces is  an analytic and not borelian subset of $\mathcal{SB}(C(2^\N))^2$.
 
The central notion to study the complexity of analytic and Borel equivalence relations on Borel standard spaces is the concept of {\it Borel  reducibility}, which originated from the works  H. Friedman and L. Stanley \cite{FS} and independently from the works of L. A. Harrington, A. S. Kechris and A. Louveau  \cite{HKL}.
\begin{defi} Let  $R$ and $S$ be  two  Borel equivalence relations on Borel standard spaces $X$ and $Y$, respectively. One  says that $ R$  is  Borel reducible to $S$,   (denoted by $R\leq_{B} S$) if there exists a Borel function $\phi: X \to Y$ such that
$$ xRy \iff \phi(x)S\phi(y),
$$
for all $x,y\in X$.  The relation $R$ is  \emph{Borel bireducible} to $S$, (denoted by $R\sim_B S$), whenever both $R\leq_B S$ and $S\leq_B R$ hold. 
\end{defi}
This  can be interpreted as that the equivalence relation $R$ is classified by a Borel assignment  of invariants provided by  equivalence classes for $S$.
Observe that a Borel reduction induces an   embedding from the quotient space $X/R$ to $Y/S$, so $X/R$ has less than or equal   cardinality  that of $Y/S$. 

Ferenczi,  Louveau and Rosendal \cite{FLR}  proved that the relation of isomorphism between separable Banach spaces is a complete analytic equivalence relation, i.e., that any analytic equivalence relation Borel reduces to it. 

For $X$ a Polish space,   let id($X$) be the identity relation 
on the space $X$.  Since any two  standard Borel spaces with the same cardinality  are Borel isomorphic, it follows that for any uncountable $X$,
\[{\rm id}(X)\sim_B {\rm id}(\R).\] 
Among the uncountable  Borel equivalence relations,  the simplest  is ${\rm id}(\R)$. In fact, it was proved by  Silver \cite{Si}  that given a Borel equivalence relation $(X,R)$,  either it has countable many classes of equivalence,  or ${\rm id}(\R)$ is Borel reducible to  $(X, R)$. 
An equivalence relation admitting  the reals as a complete invariant is called  {\it smooth}, that is, when  it is reducible to ${\rm id}(\R)$.

The simplest example of a non-smooth equivalence relation is the relation of eventual agreement $E_0$ on $2^{\N}$, i.e.,  for $x,y\in 2^{\N}$, 
$$xE_0y \iff  (\exists N\in \N)  (x(n)=y(n), \,n\geq N).$$
Harrington, Kechris, and Louveau \cite{HKL} proved that $E_0$ is minimal  among non-smooth Borel equivalence relations with respect to  $\leq_B$.  

The following notion measures the complexity of the relation of isomorphism between subspaces of a separable Banach space and  was introduced by Ferenczi and Rosendal \cite{FR}.
\begin{defi} A separable Banach space $X$ is ergodic if  
$$(2^\N, E_0)\leq_B (\mathcal{SB}(X), \simeq).$$
\end{defi}
It follows that an ergodic Banach space  has at least $2^\N$ non-isomorphic subspaces and the equivalence relation of isomorphism between its subspaces is non-smooth. 

Rosendal  \cite{R} notices that  every hereditarily indecomposable (H.I) Banach space (i.e., a space in which no  closed infinite dimensional  subspace can be written as the direct sum of two closed infinite dimensional subspaces) is ergodic. By Gowers dichotomy \cite{Gd},  every Banach space  contains  an H.I subspace or an unconditional basic sequence. Since every Banach space containing an ergodic subspace must be ergodic,  one  can approach the study of ergodicity  by first restricting to spaces with unconditional basis.   

Ferenczi and Rosendal  \cite{FR} proved that a non ergodic Banach space  $X$ with unconditional basis  satisfies some regularity properties like being isomorphic to its square and to its hyperplanes, and more generally must be  isomorphic to $X\oplus Y$ for  any subspace  $Y$ of $X$ generated by a subsequence of the basis.  It was conjectured in \cite{FR} that every  separable Banach space not isomorphic to $\ell_2$ must be ergodic.

Dilworth, Ferenczi, Kutzarova and Odell  \cite{DFKO} proved  that every  Banach space $X$ with a strongly asymptotic $\ell_p$  basis  ($1\leq p \leq \infty$) not  equivalent to the unit vector basis of $\ell_p$ (or $c_0$ if $p=\infty$) is ergodic.
This result was generalized by R. Anisca \cite{A}, who  constructed  explicit  Borel reductions to prove that every separable asymptotically Hilbertian  space (and therefore every weak Hilbert space) not isomorphic to $\ell_2$ is ergodic.

Recall that a Banach space $X$ is called  \emph{(complementably) minimal} (notions due to Pe\l czy\'nski and Rosenthal, respectively) if every infinite-dimensional  closed subspace $Y$ of $X$ contains a  (complemented) subspace $Z$ isomorphic to $X$. Clearly, every (complemented) subspace of a (complementably) minimal space is also a (complementably) minimal space.  
Ferenczi \cite{F}   proved  that  a  separable Banach space without minimal subspaces must be ergodic. 
Hence, the conjecture in \cite{FR} is related to the following problem: {\emph Is every  minimal Banach space not isomorphic to $\ell_2$  ergodic?}.

It is well known that   $c_0$ and  $\ell_p$ ($1\leq p<\infty$) are complementably minimal spaces, while the dual of the Tsirelson space $T^*$  is an example of minimal but not complementably minimal space \cite{CJT}.   The first example of a complementably minimal space other than $c_0$ and  the $\ell_p$'s   is the Schlumprecht space and its dual \cite{Sc}. The list of minimal spaces known so far is completed with the family of Schlumprecht type spaces and their duals constructed  by  complex interpolation methods in \cite{CKKM}, and  every infinite dimensional closed subspace of each of the above.   For classical spaces,  it was proved in \cite{FG}  that  $c_0$ and $\ell_p$  for $1\leq p<2$ are ergodic. Rosendal  \cite{R2}   proved that the dual of the Tsirelson space is ergodic. 
In this work we  prove ergodicity for  a general family of Banach spaces including all the other minimal spaces not isomorphic to $\ell_2$  listed above.  More specifically,
given a  Banach space $X$, let 
\begin{align*}p(X)&= \sup\{p\, :\, \text{$X$ has type $p$} \}, \\
q(X)&= \inf\{q\, :\, \text{$X$ has cotype $q$}\}.
\end{align*}
Recall  that a  Banach space $X$ is said to be {\it near Hilbert}  when  $p(X)=q(X)=2$.  We give a criterion for ergodicity  which  together with the Johnson and Szankowski construction of subspaces without the approximation property allows  to prove that a non-ergodic Banach space must be near Hilbert.  In particular, we solve the question of \cite{FR} about the ergodicity of the $\ell_p$ spaces, for $p>2$. We also prove that the family of  Schlumprecht type spaces and its dual are not near Hilbert, and therefore they are ergodic spaces.

Finally, as an application  of the criterion for ergodicity, we prove that  for every non near Hilbert space $X$ there does not exist  separable Banach space which is complementably universal for the  class of all subspaces of $X$. 
In particular, this is true for $X=\ell_p$, $p \not=2$. This solves a problem left open by Johnson and Szankowki  in their 1976 paper \cite{JS} and mentioned again  in \cite{JLS}. (Johnson and Szankowki verified the case $2<p<\infty$ in \cite{JS})

\section{Criterion  for ergodicity}

A Banach space $X$ has the approximation property  (AP)  if the identity operator on $X$ can be approximated uniformly on compact subsets of $X$ by linear operators of finite rank.   The Banach space $X$ is said to have the bounded approximation property (BAP) if there exists $\lambda>0$ such that the finite rank operator $T$ in the definition of AP can be taken with norm $\|T\| \leq \lambda$. In 1973,  Enflo \cite{E} presented the first example  of Banach space without the AP and therefore without a  Schauder basis.   Enflo's construction was simplified by Davie \cite{D} who used probabilistic methods to construct such examples inside   $\ell_p$-spaces $(2<p\leq \infty)$.    Later in 1978, Szankowski \cite{S} proved that  the other range  of $\ell_p$-spaces ($1\leq p <2$)  also  have subspaces failing AP.  
 The criterion we  introduce  to  study  ergodicity in Banach spaces  is based on a criterion introduced by Enflo, and  used in the works of  Davie and Szankowski,  to prove that  a space  fails the AP.   

\

We first introduce some notation used throughout the paper. For every $n\in \N$,  denote by $I_n=\{ 2^n, 2^n+1, \ldots , 2^{n+1}-1\}$.  Given a Banach space $X$ and  sequences of vectors $(z_{n,\epsilon})_{n\in \N}$  in $X$,  $(z_{n,\epsilon}^*)_{n\in \N}$ in $X^*$,   $(\epsilon=0,1)$, we denote by  $Z=  \overline{span} \{ z_{j,  \epsilon}: \, j\in \N, \, \epsilon=0,1 \}$ and  we shall consider for every  $t\in 2^{\N}$  the closed subspace   $$X_t= \overline{\spa} \left \{ z_{j,t(n)} \, : \, j\in I_{n},  \, n=1,2,3, \ldots \right\}.$$   
If  $T: X_t \to Z$ is  a bounded and linear operator  we define the $n$-trace  of $T$ as
\[ \beta^n_{t}(T) = 2^{-n} \sum_{j\in I_n} z_{j,t(n)}^*T(z_{j,t(n)}). 
\]

\begin{defi} \label{cEnflo}A Banach space $X$ satisfies the  {\emph Cantorized-Enflo criterion}  if there exist  bounded  sequences of vectors $(z_{n,\epsilon})_{n\in \N}$  in $X$,  $(z_{n,\epsilon}^*)_{n\in \N}$ in $X^*$ $(\epsilon=0,1)$ and a sequence of   real scalars $(\alpha_n)_n$  such that
\begin{enumerate}
\item $z_{i,\epsilon}^*(z_{j,\tau})=\delta_{ij} \delta_{\epsilon \tau}$ for all $i,j\in \N$ and $\epsilon, \tau=0,1$.
\item \label{A}   For every $t,s\in 2^\N$ and every operator $T: X_t \to X_s$
$$\left | \beta^n_{t}(T) - \beta^{n-1}_{t}(T) \right | \leq \alpha_n\|T\|
$$
\item \label{B} $\sum_n  \alpha_n<\infty$.
\end{enumerate}
\end{defi}

Recall that a  subset of a topological space is said to be meagre if it is  the countable union of nowhere dense subsets (sets whose closure has empty interior).
An equivalence relation on a standard Borel space $X$ is said to be meagre if  it is a meagre subset of $X^2$.

Let $t\in 2^{\N}$ and $n\in \N$. We denote by $t/n=\{k\leq n: \, t(k)=1\}$.   The $E_0'$ equivalence relation on $2^\N$ is defined as 
\[ xE_0'y \, \iff \, \exists n (|t/n|=|s/n|) \wedge (t(k)=s(k), \, k\geq n).
\]
$E_0'$ is a refinement of $E_0$, that is $E_0'\subseteq E_0$. 
In connection with Borel reducibility ordering we shall use the following result from  Rosendal \cite{R}.

\begin{pro}\cite[Prop. 15]{R} \label{rosendal} Let $E$ be a meagre equivalence relation on $2^\N$ containing $E_0'$. Then $E_0\leq_B E$.
\end{pro}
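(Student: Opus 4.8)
The plan is to build an explicit continuous reduction $f\colon 2^\N\to 2^\N$ of $E_0$ into $E$, using nothing about $E$ beyond the two hypotheses. Since $E$ is meagre we may fix an increasing sequence $F_0\subseteq F_1\subseteq\cdots$ of closed nowhere dense subsets of $2^\N\times 2^\N$ with $E\subseteq\bigcup_n F_n$. I will construct, by recursion on $k$, a partition of $\N$ into consecutive finite intervals $J_0<J_1<J_2<\cdots$ and, for each $k$, two distinct words $w_k^0\neq w_k^1$ in $2^{J_k}$ having \emph{the same number of coordinates equal to $1$}; then I set $f(t)\restriction J_k=w_k^{t(k)}$. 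Because $f(t)\restriction J_k$ depends only on the single bit $t(k)$, the map $f$ is continuous, and it sends $E_0$ into $E_0'$: if $t(j)=s(j)$ for all $j\ge N$, then $f(t)$ and $f(s)$ agree on $\bigcup_{k\ge N}J_k$, while on the coordinates before $\min J_N$ they carry the same number of $1$'s by the balance condition (and they also agree at $\min J_N$), so with $n=\min J_N$ we get $f(t)\,E_0'\,f(s)$, hence $f(t)\,E\,f(s)$ since $E_0'\subseteq E$. This gives one direction of the reduction for free.

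For the converse it suffices, as $E\subseteq\bigcup_nF_n$, to arrange that the set
\[
S=\bigl\{(x,y):\ x\restriction J_k,\ y\restriction J_k\in\{w_k^0,w_k^1\}\ \text{for all }k,\ \text{and}\ x\restriction J_k\neq y\restriction J_k\ \text{for infinitely many }k\bigr\}
\]
be disjoint from every $F_n$: indeed, if $\neg(t\,E_0\,s)$ then $t(k)\neq s(k)$ for infinitely many $k$, so $(f(t),f(s))\in S$, and $S\cap E=\emptyset$ then yields $\neg(f(t)\,E\,f(s))$. The recursion secures $S\cap F_n=\emptyset$ as follows. At stage $k$, with $J_0,\dots,J_{k-1}$ and $w_j^0,w_j^1$ $(j<k)$ already fixed, there are only finitely many \emph{prefix pairs} $(u,v)$ — pairs of words of length $\min J_k$ with $u\restriction J_j,\,v\restriction J_j\in\{w_j^0,w_j^1\}$ for $j<k$ — and finitely many \emph{requirements}: for each prefix pair $(u,v)$, neither of the two basic clopen boxes $[u^\frown w_k^0]_1\times[v^\frown w_k^1]_2$ and $[u^\frown w_k^1]_1\times[v^\frown w_k^0]_2$ of $2^\N\times2^\N$ should meet $F_k$ (here $[\,\cdot\,]_1,[\,\cdot\,]_2$ denote the boxes prescribing the first, resp.\ second, coordinate on $\bigcup_{j\le k}J_j$, and $w_k^0,w_k^1$ are the words under construction). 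A single requirement is met by finitely extending the two partial words currently being built, because $F_k$ is closed and nowhere dense, so inside any basic clopen box there is a smaller one missing $F_k$; and once met, a requirement stays met under further extension. Running through all requirements one at a time therefore produces words whose extension beyond $\min J_k$ misses $F_k$ over every prefix pair; a final padding makes the two words distinct, of equal length, and balanced in their number of $1$'s, which determines $J_k$. Since $(F_n)$ is increasing, stage $k$ handling $F_k$ handles $F_0,\dots,F_k$; so any $(x,y)\in S$ disagrees on some block $J_k$ with $k\ge n$, whence $(x,y)$ lies in one of the boxes we made disjoint from $F_k\supseteq F_n$, so $(x,y)\notin F_n$. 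Thus $S\cap\bigcup_nF_n=\emptyset$, $f$ is a reduction, and $E_0\leq_B E$.

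I expect the recursion step to be the only genuine point: one must defeat a closed nowhere dense set simultaneously over all the (finitely many) prefix pairs using only the room inside one block, \emph{and} keep the two block‑words balanced in their number of $1$'s — it is precisely that balance which lets the forward implication land inside $E_0'$ rather than merely inside $E_0$. Both are handled by the observation that ``avoiding a closed nowhere dense set on a prescribed clopen box'' is an open condition stable under refinement, together with a routine padding at the end of each stage; note this argument never uses that $E$ is an equivalence relation, only that it is meagre and contains $E_0'$. (When $E$ is Borel there is a softer route: every $E$‑class is $E_0'$‑invariant, hence meagre or comeagre by the generic ergodicity of $E_0'$; Kuratowski--Ulam applied to the meagre set $E$ forbids a comeagre class, so all $E$‑classes are meagre, and then $E$ cannot be smooth — a Borel reduction to $\mathrm{id}(\R)$ would be $E_0'$‑invariant, hence generically constant, producing a comeagre $E$‑class — so the Harrington--Kechris--Louveau dichotomy \cite{HKL} gives $E_0\leq_B E$. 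Since in the applications $E$ need not be Borel, the direct construction above is the one to give.)
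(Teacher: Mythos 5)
Your proof is correct. Note that the paper does not prove this proposition at all --- it is quoted from Rosendal's \emph{Incomparable, non-isomorphic and minimal Banach spaces} \cite{R} --- and your block-by-block Mycielski-style construction (balanced words $w_k^0,w_k^1$ to land in $E_0'$ on the $E_0$ side, finite extensions defeating an increasing sequence of closed nowhere dense sets over all prefix pairs on the non-$E_0$ side) is essentially the original argument, so there is nothing to compare beyond confirming that the one genuinely delicate point --- meeting all finitely many box requirements at stage $k$ while preserving the balance of the two words --- is handled correctly.
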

Recall that a Banach space $X$ is said to be  \emph{ complementably universal} for a family $\mathcal A$ of Banach spaces if every space in $\mathcal A$ is isomorphic to a complemented subspace of $X$.   
In  \cite{JS}  Johnson and Szankowski  proved that there is no separable Banach space which is   complementably universal  for the class $\mathcal{A}_p$ of all subspaces of $\ell_p$, $2<p<\infty$. As observed by W. B. Johnson \cite{J},   it follows   that a complementably  universal  Banach space for the class $\mathcal{A}_p$ ($2<p<\infty$) must have density character  at least the continuum., where the density character of a topological space $X$ is the least cardinality of a dense subset of $X$.
In particular, this shows that  the family of non isomorphic subspaces of $\ell_p$, for $(2<p<\infty)$ has the cardinality of the continuum.  We use the ideas of the proof  in \cite{JS} (see also \cite{JS3}) to  establish   a criterion for ergodic Banach spaces.

\begin{lem} \label{uncoun} Let  $X$ be a Banach space satisfying the Cantorized-Enflo criterion,  and $\Gamma \subseteq 2^\N$.  Then  every Banach space which is  complementably universal for the family $\{X_t\}_{t\in\Gamma}$ has density character at least  cardinality of $\Gamma$.
\end{lem}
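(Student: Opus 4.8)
\emph{Proof plan.} The argument is by contradiction, in the spirit of Johnson--Szankowski \cite{JS}. Suppose $Y$ is complementably universal for $\{X_t\}_{t\in\Gamma}$ but $\mathrm{dens}(Y)<|\Gamma|$. Since $Y$ contains an isomorphic copy of the infinite-dimensional space $X_t$, it is infinite-dimensional, so $\mathrm{dens}(Y)\ge\aleph_0$; hence we may assume $|\Gamma|>\aleph_0$. For each $t\in\Gamma$, complementation gives bounded operators $u_t\colon X_t\to Y$ and $v_t\colon Y\to X_t$ with $v_tu_t=\mathrm{id}_{X_t}$. Partitioning $\Gamma$ into the countably many sets $\{t:\|u_t\|,\|v_t\|\le m\}$ and using $\aleph_0\le\mathrm{dens}(Y)<|\Gamma|$, one of these sets, say $\Gamma_1$, satisfies $|\Gamma_1|>\mathrm{dens}(Y)$; let $M\ge 1$ be the corresponding uniform bound.

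For $s,t\in\Gamma_1$ consider the transfer operator $T_{s,t}:=v_t\circ u_s\colon X_s\to X_t\subseteq Z$, so $\|T_{s,t}\|\le M^2$, $T_{t,t}=\mathrm{id}_{X_t}$, and the traces $\beta^n_s(T_{s,t})$ are defined. I would first record two facts. (i) If $s(n)\ne t(n)$ then $\beta^n_s(T_{s,t})=0$: for $j\in I_n$ we have $T_{s,t}(z_{j,s(n)})\in X_t$, and the functional $z_{j,s(n)}^*$ kills every spanning vector $z_{j',t(m)}$ of $X_t$, since by biorthogonality (1) a nonzero pairing forces $j=j'$, hence $m=n$, hence $s(n)=t(n)$. (ii) If $s(n)=t(n)$ then $z_{j,s(n)}=z_{j,t(n)}=v_tu_t(z_{j,s(n)})$, so $T_{s,t}(z_{j,s(n)})-z_{j,s(n)}=v_t(u_s(z_{j,s(n)})-u_t(z_{j,s(n)}))$, and expanding the trace (using $z_{j,s(n)}^*(z_{j,s(n)})=1$ and $|I_n|=2^n$) gives
\[\bigl|\beta^n_s(T_{s,t})-1\bigr|\le 2^{-n}CM\sum_{j\in I_n}\bigl\|u_s(z_{j,s(n)})-u_t(z_{j,s(n)})\bigr\|,\qquad C:=\sup_{i,\epsilon}\|z_{i,\epsilon}^*\|<\infty.\]

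The core step is a counting argument producing a good pair $s\ne t$. Using $\sum_n\alpha_n<\infty$, fix $N$ with $M^2\sum_{i>N}\alpha_i<1/2$. Refine $\Gamma_1$ according to the finitely many restrictions to $\{1,\dots,N\}$ to obtain $\Gamma_2$ with $|\Gamma_2|>\mathrm{dens}(Y)$; all $t\in\Gamma_2$ share the same restriction to $\{1,\dots,N\}$, in particular the same value $\epsilon_0=t(N)$, and the $2^N$ vectors $z_{j,\epsilon_0}$ ($j\in I_N$) lie in every such $X_t$. The map $t\mapsto(u_t(z_{j,\epsilon_0}))_{j\in I_N}\in Y^{I_N}$ takes values in a finite power of $Y$, which has density character $\mathrm{dens}(Y)<|\Gamma_2|$; since a metric space of density $\kappa$ contains no $r$-separated set of size $>\kappa$ (the open balls of radius $r/2$ around such points are pairwise disjoint), there are $s\ne t$ in $\Gamma_2$ with $\|u_s(z_{j,\epsilon_0})-u_t(z_{j,\epsilon_0})\|<1/(2CM)$ for all $j\in I_N$. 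For this pair, (ii) at level $n=N$ gives $|\beta^N_s(T_{s,t})-1|<1/2$. On the other hand $s\ne t$ but they agree on $\{1,\dots,N\}$, so there is a first $n_0>N$ with $s(n_0)\ne t(n_0)$; then $\beta^{n_0}_s(T_{s,t})=0$ by (i), while telescoping the inequality (2) of the Cantorized--Enflo criterion between levels $N$ and $n_0$ yields $|\beta^{n_0}_s(T_{s,t})-\beta^N_s(T_{s,t})|\le\|T_{s,t}\|\sum_{i>N}\alpha_i\le M^2\sum_{i>N}\alpha_i<1/2$, so $|\beta^N_s(T_{s,t})|<1/2$. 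This contradicts $|\beta^N_s(T_{s,t})-1|<1/2$, and therefore $\mathrm{dens}(Y)\ge|\Gamma|$.

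The delicate point — and the reason for telescoping rather than comparing a single jump — is that the separability input only controls $u_s$ and $u_t$ on a \emph{finite}, prescribed family of vectors, so it can only force the trace $\beta^n_s(T_{s,t})$ to be near $1$ at a level $N$ chosen \emph{in advance}; the coordinate $n_0$ at which $s$ and $t$ first differ (where the trace is exactly $0$ by (i)) is not under our control, and the error at level $n_0-1$ need not be small, so a naive comparison of $\beta^{n_0}_s$ and $\beta^{n_0-1}_s$ fails. Running the summable increments $\alpha_i$ from the fixed level $N$ all the way out to $n_0$ is exactly what bridges the two. The remaining ingredients — the two pigeonhole extractions, the identity $\mathrm{dens}(Y^{I_N})=\mathrm{dens}(Y)$, and the packing estimate — are routine.
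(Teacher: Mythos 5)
Your proof is correct and follows essentially the same route as the paper's: extract a subfamily with uniform embedding/projection constants, pigeonhole on initial segments, use a density/packing argument to find $s\neq t$ whose embeddings nearly agree on the finitely many vectors at a pre-chosen level $N$, and play the trace near $1$ at level $N$ against the trace $=0$ at the first disagreement via the telescoped Cantorized--Enflo estimate. The only differences are cosmetic: you prove the general density-character statement directly (the paper's written argument treats the separable $W$ / uncountable $\Gamma$ case and leaves the uniform-constant pigeonhole implicit), and you fix the threshold $1/2$ up front rather than letting an auxiliary $\delta$ tend to $0$.
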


\begin{proof}
Let $X$ be  a Banach space satisfying the Cantorized-Enflo criterion and consider sequences $(z_{n,\epsilon})_{n\in \N}$  on $X$,  $(z_{n,\epsilon}^*)_{n\in \N}$ on $X^*$, ($\epsilon=0,1$),  and real scalars $(\alpha_n)_n$  as in  Definition \ref{cEnflo}. 
Suppose  that there is   $\Gamma \subseteq 2^{\N}$   uncountable  and a separable Banach space $W$ such that  for every  $t\in \Gamma$, $X_t$ is isomorphic to  a complemented  subspace of  $W$.  We may assume that for some $\lambda>0$  there exist  an embedding $T_t: X_t \to W$  and a projection onto $P_t:  X_t \to T_tX_t$, for every  $t\in \Gamma$,  such that $\|P_t\|\leq \lambda$ and 
$$ \|x\| \leq \|T_tx\| \leq \lambda \|x\|   \, \, \, \, \text{for every}  \, \, \ x\in X_t.
$$
Take $\delta>0$, it follows by conditions (\ref{A}) and  (\ref{B}) in Definition \ref{cEnflo}, that there exists $k=k(\delta)$ such that for every $m>k$
$$ \left |  \beta^m_t(T) -\beta^k_t(T) \right | \leq \delta \|T\|,
$$   
for every $t,s\in 2^\N$ and  any operator $T: X_t \to X_s$.
Since $W$ is separable and $\Gamma$ is uncountable,  there exist $t  \neq s \in \Gamma$ such that  $t(i)=s(i)$  ($i\leq k$) with 
$$ \|T_t (z_{j, t(k)}) - T_s(z_{j, s(k)})\| \leq 1/ \lambda 2^{k},   \, \, \, \, j\in I_{k}.
$$
Define now $T: X_t \to X_s$ by $T= T_s^{-1}P_sT_t$, where $T_s^{-1}: T_sX_s \to X_s$. We have  $T_s^{-1}P_s( T_t (z_{j, t(k)}) - T_s(z_{j, s(k)})) = T(z_{j, t(k)})- z_{j,s(k)}$,  and therefore  
$$ \sum_{j\in I_{k}} \| Tz_{j, t(k)}- z_{j, s(k)}\|  \leq \sum_{j\in I_{k}} \|T_s^{-1}P_s\| \| T_t (z_{j, t(k)}) - T_s(z_{j, s(k)})\| \leq 1.
$$
Hence we have, 
$$ \left |  \beta^k_t(T) \right | \geq 1- 2^{-k}   \sum_{j\in I_{k}}  \left \| z_{j,s(k)}^* \left (z_{j, s(k)}-Tz_{j,t(k)} \right)  \right \| \geq 1- 2^{-k}. 
$$   
Now since $t(m)\neq s(m)$  for some $m>k$ and $(z_{j,\epsilon}^*, z_{j,\epsilon})$ is a biorthogonal system,  we have 
$$ \beta^m_t(T)=0.
$$
Therefore, 
$$\|T\| \geq \delta^{-1}  \left |  \beta^m_t(T) -\beta^k_t(T) \right | \geq (1/2) \delta^{-1}.$$
On the other hand,
$$  \|T\|\leq  \|T_s^{-1}\| \|P_s\|\|T_t\| \leq \lambda^2.
$$
Since $\delta$ was arbitrary, we get a contradiction. 
\end{proof}

\begin{teo} \label{criterio} Every separable Banach space satisfying the Cantorized-Enflo criterion is ergodic.
\end{teo}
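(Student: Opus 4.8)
The plan is to turn the parametrized family $t\mapsto X_t$ into a Borel reduction of $(2^\N,E_0)$ into $(\mathcal{SB}(X),\simeq)$ with the help of Proposition~\ref{rosendal}. Let $E$ be the equivalence relation on $2^\N$ given by $tEs\iff X_t\simeq X_s$. The argument rests on three points: (a) $t\mapsto X_t$ is Borel, so that $E$ is analytic, hence has the Baire property, and is itself Borel reducible to $\simeq$; (b) $E_0'\subseteq E$; (c) $E$ is meagre. Granting these, Proposition~\ref{rosendal} gives $E_0\leq_B E$, and composing with the reduction $t\mapsto X_t$ yields $E_0\leq_B(\mathcal{SB}(X),\simeq)$, i.e. $X$ is ergodic.

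For (a) I would verify the defining condition for Borel maps into $\mathcal{F}(X)$: for every open $U\subseteq X$ the set $\{t:X_t\cap U\neq\emptyset\}$ is Borel. Since finite rational linear combinations of the generators $z_{j,t(n)}$ are dense in $X_t$, this set is the countable union, over the finitely supported rational coefficient patterns, of the sets $\{t:\sum q_{n,j}z_{j,t(n)}\in U\}$; each of these is Borel because $t\mapsto\sum q_{n,j}z_{j,t(n)}$ depends continuously on finitely many coordinates of $t$. Condition (1) forces each $X_t$ to be infinite dimensional, so $t\mapsto X_t$ is Borel into $\mathcal{SB}(X)$; composing with a fixed embedding $X\hookrightarrow C(2^\N)$ exhibits $E$ as the pullback of the analytic isomorphism relation, so $E$ is analytic.

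Point (b) is where the combinatorics of $E_0'$ enters. If $tE_0's$ then $t$ and $s$ agree off a finite set $F$, and the equality $|t/n|=|s/n|$ forces $F$ to split into equal halves $F^{+}=\{k\in F:t(k)=1,\ s(k)=0\}$ and $F^{-}=\{k\in F:t(k)=0,\ s(k)=1\}$. Fixing a bijection between $F^{+}$ and $F^{-}$ and changing $t$ into $s$ one transposition at a time, it is enough to show $X_t\simeq X_{t'}$ when $t,t'$ differ exactly at two coordinates $k,l$ with $t(k)=1,\ t(l)=0$ and $t'(k)=0,\ t'(l)=1$. In that case $X_t$ and $X_{t'}$ share the closed subspace $Y=\overline{\spa}\{z_{j,t(n)}:n\neq k,l,\ j\in I_n\}$, and $X_t=Y\oplus G_t$, $X_{t'}=Y\oplus G_{t'}$, where $G_t=\spa(\{z_{j,1}:j\in I_k\}\cup\{z_{j,0}:j\in I_l\})$ and $G_{t'}$ is the analogous span with $0$ and $1$ interchanged. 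The crucial observation is that these are \emph{topological} direct sums: by condition (1) together with the boundedness of $(z_{n,\epsilon})$ and $(z_{n,\epsilon}^*)$, the finite rank operator $x\mapsto\sum_{j\in I_k}z_{j,1}^*(x)z_{j,1}+\sum_{j\in I_l}z_{j,0}^*(x)z_{j,0}$ is a bounded projection of $X_t$ onto $G_t$ with kernel exactly $Y$. Since $\dim G_t=2^{k}+2^{l}=\dim G_{t'}$, any linear isomorphism $G_t\to G_{t'}$ extended by the identity on $Y$ is an isomorphism $X_t\to X_{t'}$; iterating over the chosen pairing gives $X_t\simeq X_s$, so $E_0'\subseteq E$.

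For (c) I would invoke the topological $0$–$1$ law for equivalence relations: an equivalence relation on a Polish space with the Baire property is either meagre or has a comeagre class. If $E$ had a comeagre class $C$, then $C$ would be uncountable and all $X_t$ with $t\in C$ would be isomorphic to a single $X_{t_0}$; but then the separable space $X_{t_0}$ would be complementably universal for $\{X_t\}_{t\in C}$, contradicting Lemma~\ref{uncoun}. Hence $E$ is meagre and the proof closes as above. I expect step (b) to be the real work — checking that the swapped block is honestly a complemented finite dimensional summand and hence interchangeable — while (a), (c) and the final assembly are routine once Lemma~\ref{uncoun}, the $0$–$1$ law, and Proposition~\ref{rosendal} are in hand.
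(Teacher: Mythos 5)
Your overall architecture is the same as the paper's: define $E$ on $2^\N$ by $tEs \iff X_t \simeq X_s$, check that $\phi(t)=X_t$ is Borel and that $E_0'\subseteq E$, prove $E$ meagre, and conclude via Proposition~\ref{rosendal} composed with $\phi$. Steps (a) and (b) are correct, and in fact more detailed than the paper, which disposes of (b) in one sentence; your observation that the swapped blocks are bounded-projection-complemented finite-dimensional summands of equal dimension is exactly the right justification.

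The gap is in (c). The ``topological $0$--$1$ law for equivalence relations'' you invoke is false as stated: on $\R$, the Borel equivalence relation whose two classes are $(-\infty,0)$ and $[0,\infty)$ is non-meagre (its graph contains the nonempty open set $(-\infty,0)^2$) yet has no comeagre class. The dichotomy ``meagre or some comeagre class'' does hold for the particular $E$ at hand, but only because each $E$-class is $E_0'$-invariant, i.e.\ invariant under the group of finitary permutations of coordinates acting on $2^\N$, all of whose orbits are dense; the genuine topological $0$--$1$ law (invariant sets with the Baire property under a topologically transitive action by homeomorphisms are meagre or comeagre) then forces each class, being analytic, to be meagre or comeagre, and Kuratowski--Ulam upgrades non-meagreness of $E$ to the existence of a non-meagre, hence comeagre, class. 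If you want to avoid this detour, argue as the paper does: apply Lemma~\ref{uncoun} with $\Gamma=[t_0]_E$ to see that every $E$-class is \emph{countable} (an uncountable class would make the separable space $X_{t_0}$ complementably universal for the uncountable family $\{X_t\}_{t\in\Gamma}$, since each $X_t$, $t\in\Gamma$, is isomorphic to the whole of $X_{t_0}$ and hence to a complemented subspace of it); every class is then meagre, and since $E$ is analytic, as you noted, it has the Baire property, so Kuratowski--Ulam yields that $E$ itself is meagre. With (c) repaired either way, the remainder of your argument is fine.
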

\begin{proof}

Let $X$ be a separable Banach space  satisfying the Cantorized-Enflo criterion. Define an equivalence relation $E$ on $2^{\N}$ by setting $sEt$ if and only if $X_s$ is isomorphic to $X_t$.  We observe that $E$ is $E_0'$-invariant. Indeed,  if  $tE_0' s$   then  $X_t$  and  $X_s$  are generated by the same sequence of vectors except for finite
sets of the same cardinality,
and therefore are isomorphic spaces. By  Lemma \ref{uncoun}  each equivalence class of $E$ is countable and then a meagre subset of $2^{\N}$. It is a general fact that an equivalence relation is meagre whenever each of its equivalence class is meagre \cite{Ke}. Hence $E$ is a meagre equivalence relation on $2^\N$, and  we have from  Proposition  \ref{rosendal} that $E_0 \leq_B E$. 
It is clear that  the  function $\phi: 2^{\N} \to \mathcal{SB}(X)$ given by $\phi(t)=X_t$ is Borel. In consequence, $X$ is ergodic.
\end{proof}

\begin{obs} A Banach space satisfying the Cantorized-Enflo criterion has a continuum of non isomorphic subspaces failing the bounded approximation property. 
\end{obs}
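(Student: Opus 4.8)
The plan is to combine two facts. The proof of Theorem~\ref{criterio} already produces, inside $X$, a continuum of pairwise non-isomorphic subspaces among the $X_t$: it shows that the relation $E$ on $2^\N$ defined by $sEt\iff X_s\simeq X_t$ has only countable classes (were some $E$-class $\Gamma$ uncountable, then for any fixed $t_0\in\Gamma$ the separable space $X_{t_0}$ would be complementably universal for $\{X_t\}_{t\in\Gamma}$, contradicting Lemma~\ref{uncoun}), and since $2^\N$ is uncountable this forces a continuum of $E$-classes. So I would reduce the statement to showing that \emph{every} $X_t$ fails the bounded approximation property.

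To prove the latter I would argue by contradiction: assume $X_t$ has $\lambda$-BAP. By conditions~(\ref{A}) and~(\ref{B}) of Definition~\ref{cEnflo}, for each $\delta>0$ there is an index $k$ such that
\[ \left|\beta^m_t(T)-\beta^k_t(T)\right|\le\delta\|T\|\qquad (m\ge k)
\]
for every operator $T\colon X_t\to X_t$. Then, using $\lambda$-BAP, I would pick a finite rank operator $S\colon X_t\to X_t$ with $\|S\|\le\lambda$ whose values $S z_{j,t(k)}$ are so close to $z_{j,t(k)}$ that the biorthogonality of $(z_{j,\epsilon},z^*_{j,\epsilon})$, together with $\beta^k_t(\mathrm{Id})=2^{-k}|I_k|=1$, forces $|\beta^k_t(S)|\ge 1/2$. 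On the other hand, after a small operator-norm perturbation I may assume the range of $S$ is contained in $\spa\{z_{j,t(n)}\colon j\in I_n,\ n\le N\}$ for some $N$; since $z^*_{j,t(m)}$ kills $z_{i,t(n)}$ whenever $i\neq j$ — in particular whenever $n\neq m$ — the $m$-trace of an operator with range in the first $N$ blocks is $0$ for $m>N$, and hence $\beta^m_t(S)\to 0$. Choosing $m>k$ with $|\beta^m_t(S)|<1/4$ then gives $\delta\lambda\ge\left|\beta^k_t(S)-\beta^m_t(S)\right|\ge 1/4$, i.e. $\delta\ge 1/(4\lambda)$, contradicting the arbitrariness of $\delta$. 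Hence no $X_t$ has BAP, which together with the first paragraph proves the Remark.

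The step I expect to be the main (and only real) obstacle is replacing a general finite rank $S$ by one whose range lies in finitely many coordinate blocks: this is a routine perturbation, using only that the $z_{j,\epsilon}$ are total in $Z$ and that the biorthogonal system $(z_{j,\epsilon},z^*_{j,\epsilon})$ is bounded, so that a small operator-norm change of $S$ produces a proportionally small change in each $\beta^m_t(S)$; the rest is bookkeeping with the telescoping estimate and the biorthogonality relations. It is worth noting that the uniform norm bound $\|S\|\le\lambda$ — i.e. BAP and not merely AP — is precisely what makes the argument run, since it is what keeps $\delta\|S\|$ small while $|\beta^k_t(S)|$ stays bounded away from $0$.
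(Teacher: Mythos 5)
Your proof is correct and follows essentially the same route as the paper: the failure of BAP for each $X_t$ is derived from the telescoping estimate together with $\beta^k_t(\mathrm{Id})=1$ and the vanishing at infinity of the $n$-traces of finite-rank operators of norm at most $\lambda$, while the continuum of isomorphism classes comes from the countability of the $E$-classes via Lemma \ref{uncoun}. The only difference is that you justify explicitly (via the perturbation of $S$ into finitely many blocks) that $\beta^m_t(S)\to 0$ for finite-rank $S$, a fact the paper's displayed chain of inequalities $|\beta^n_t(T)|\leq\sum_{k>n}|\beta^k_t(T)-\beta^{k-1}_t(T)|$ uses implicitly.
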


\begin{proof} We observe that the spaces $X_t$  used in the reduction fails the BAP for every $t\in 2^\N$. Given $\lambda>0$,  let  $n\in\N$  be such that $\lambda\sum_{k>n} \alpha_k\leq 1/2$. Let  $T: X_t \to X_t$ be an operator with  $\|T\|\leq \lambda$. Since  $|\beta^n_t(U)|\leq  \|U_{|Z_n}\|$ for every $U: X_t \to Z$,    where   $Z_n=\{z_{i,t(n)}, \, i\in I_n\}$ is a compact set, we have 
$$ \|(Id-T)_{|Z_n}\| \geq |\beta^n_t(Id-T)| \geq 1- |\beta^n_t(T)|\geq 1-\sum_{k >n} |\beta^k_t(T) - \beta^{k-1}_t(T)| \geq 1-\|T\|\sum_{k>n} \alpha_k >1/2. 
$$
\end{proof}
\begin{obs} \label{wAP}
Actually, if in Definition \ref{cEnflo}, we have for every $n\in \N$,  $\left | \beta^n_{t}(T) - \beta^{n-1}_{t}(T) \right | \leq \sup\{\|Tz\|, \, z\in F_n\}$ for a finite set  $F_n$ of vectors in $X$, such that    $\sum_n \sup\{\|z\|, \, z\in F_n\}<\infty$, then every $X_t$ fails the AP (\cite[Proposition 1]{S}).
\end{obs}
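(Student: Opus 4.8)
The plan is to repeat the argument of the preceding Remark with $Z_n$ replaced by suitably weighted copies of the finite sets $F_n$; this is exactly Szankowski's criterion for failure of the (unbounded) approximation property \cite[Proposition 1]{S}. Recall what drove the previous proof: the $n$-traces behave as an Enflo trace, in that $\beta^n_t(Id)=2^{-n}|I_n|=1$ for every $n$, the series $\sum_k\bigl(\beta^k_t(T)-\beta^{k-1}_t(T)\bigr)$ converges for every bounded $T$ (being dominated by $\|T\|\sum_k\sup\{\|z\|:z\in F_k\}$), so that $\beta^\infty_t(T):=\lim_N\beta^N_t(T)$ exists, and it vanishes on finite rank operators. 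This last fact — used already in the previous proof through the inequality $|\beta^n_t(T)|\le\sum_{k>n}|\beta^k_t(T)-\beta^{k-1}_t(T)|$ — comes from the uniform boundedness of the block projections $R^{(t)}_N=\sum_{j\in I_N}z^*_{j,t(N)}\otimes z_{j,t(N)}$ that one has in the concrete realisations of the criterion, since then $\beta^N_t(T)=2^{-N}\operatorname{tr}\!\bigl(R^{(t)}_N T\bigr)$ and $|\beta^N_t(T)|\le C\,2^{-N}\operatorname{rank}(T)\,\|T\|\to 0$. With the BAP the finite rank approximants of $Id$ had a common norm bound $\lambda$, and combining $|\beta^n_t(Id-T)|\le\|(Id-T)_{|Z_n}\|$ with $|\beta^n_t(T)|\le\|T\|\sum_{k>n}\alpha_k$ produced a contradiction for large $n$. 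The hypothesis of the present Remark is engineered so that no norm bound on $T$ is needed: since $|\beta^n_t(T)-\beta^{n-1}_t(T)|\le\sup\{\|Tz\|:z\in F_n\}$ with $\sum_n a_n<\infty$, where $a_n:=\sup\{\|z\|:z\in F_n\}$, the differences can be made small simply by testing $T$ against the vectors of $F_n$.

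Concretely, I would argue by contradiction. First choose weights $\lambda_n\uparrow\infty$ with $\lambda_n a_n\to0$ and $\sum_n\lambda_n^{-1}<\infty$; writing $b_n=\sum_{k\ge n}a_k$ one may take $\lambda_n=\sqrt{b_n}/a_n$ (and $\lambda_n=2^n$ whenever $a_n=0$), since then $\lambda_n a_n=\sqrt{b_n}\to0$, $\lambda_n\ge b_n^{-1/2}\to\infty$, and $\sum_n\lambda_n^{-1}=\sum_n a_n/\sqrt{b_n}\le 2\sqrt{b_1}$ by a telescoping estimate. Fix $n$ with $\sum_{k>n}a_k<1/4$ and $\sum_{k>n}\lambda_k^{-1}\le S<\infty$, and set
\[ K=\{z_{j,t(n)}:j\in I_n\}\cup\{\lambda_k z:z\in F_k,\ k>n\}\cup\{0\}, \]
a compact subset of $X_t$ (a finite set together with a norm-null sequence). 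If $X_t$ had the AP there would exist, for any prescribed $\varepsilon>0$, a finite rank operator $T:X_t\to X_t$ with $\sup_{z\in K}\|Tz-z\|\le\varepsilon$. Then $\|(Id-T)_{|Z_n}\|\le\varepsilon$, while for $k>n$ and $z\in F_k$, from $\lambda_k z\in K$ we get $\|Tz-z\|\le\varepsilon/\lambda_k$, hence $\sup_{z\in F_k}\|Tz\|\le a_k+\varepsilon/\lambda_k$ and therefore $|\beta^k_t(T)-\beta^{k-1}_t(T)|\le a_k+\varepsilon/\lambda_k$.

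Letting $M=\sup_{j,\epsilon}\|z^*_{j,\epsilon}\|$ and using $\beta^n_t(Id)=1$, $\beta^\infty_t(T)=0$ and $\beta^n_t(Id-T)=1-\beta^n_t(T)$, this yields
\[ 1-M\varepsilon\le|\beta^n_t(T)|=\Bigl|\beta^\infty_t(T)-\sum_{k>n}\bigl(\beta^k_t(T)-\beta^{k-1}_t(T)\bigr)\Bigr|\le\sum_{k>n}a_k+\varepsilon\sum_{k>n}\lambda_k^{-1}\le\tfrac14+\varepsilon S, \]
the left-hand inequality coming from $|1-\beta^n_t(T)|=|\beta^n_t(Id-T)|\le M\|(Id-T)_{|Z_n}\|\le M\varepsilon$. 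Choosing $\varepsilon$ so small that $M\varepsilon\le1/4$ and $\varepsilon S\le1/4$ gives $3/4\le1/2$, a contradiction. Hence $X_t$ fails the AP, and since $t\in 2^\N$ was arbitrary, so does every $X_t$.

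The telescoping estimate for the $\beta^n_t$, the compactness of $K$, and the construction of the weights $\lambda_n$ are routine. The one genuinely substantive point — and the place I expect the actual work — is the vanishing $\beta^\infty_t(T)=0$ for finite rank $T$: this is the heart of Enflo's method, is precisely what \cite[Proposition 1]{S} supplies, and in any instance of the Cantorized-Enflo criterion it rests on the disjointly supported, uniformly bounded block structure of the systems $(z_{j,\epsilon})$ and $(z^*_{j,\epsilon})$, which makes the block projections $R^{(t)}_N$ uniformly bounded.
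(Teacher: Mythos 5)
Your proof is correct and follows essentially the same route as the paper, which for this remark offers no argument beyond the citation to \cite[Proposition 1]{S} and the computation in the adjacent BAP remark; your write-up is a faithful reconstruction of that cited argument (the weighted compact set $K$, the telescoping of the $n$-traces, and $\beta^n_t(Id)=1$), and your weights $\lambda_n=\sqrt{b_n}/a_n$ do the job. You are also right to isolate the vanishing of $\lim_n\beta^n_t(T)$ on finite-rank operators as the one ingredient not formally contained in Definition \ref{cEnflo}: it is supplied in every concrete instance in the paper by the block structure (weak* nullity of the $z^*_{j,\epsilon}$, or equivalently uniform boundedness of the block maps $R^{(t)}_N$), it is part of what \cite[Proposition 1]{S} assumes, and, as you observe, it is already used implicitly in the preceding remark's proof.
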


\begin{obs}
We also proved that $E_0$ is Borel reducible to the relation of  complemented biembeddability  between the subspaces of a  separable Banach space satisfying the Cantorized-Enflo criterion. 
\end{obs}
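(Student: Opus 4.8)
The plan is to repeat the proof of Theorem~\ref{criterio} almost verbatim, with the isomorphism relation replaced by the relation $\approx$ of complemented biembeddability: $Y \approx Z$ means that $Y$ is isomorphic to a complemented subspace of $Z$ \emph{and} $Z$ is isomorphic to a complemented subspace of $Y$. The first thing to check is that $\approx$ really is an equivalence relation on $\mathcal{SB}(X)$: reflexivity and symmetry are immediate, and transitivity holds because the composition of two complemented embeddings is again a complemented embedding (if $Y$ sits complementably in $Z$ via a projection $P$, and $Z$ sits complementably in $W$ via a projection $Q$, transport $P$ along the embedding $Z\hookrightarrow W$ and compose with $Q$ to complement the image of $Y$ in $W$). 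Then define $E$ on $2^{\N}$ by $sEt \iff X_s \approx X_t$, where the $X_t$ are the subspaces attached to a fixed choice of Cantorized--Enflo data as in Definition~\ref{cEnflo}.

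The key step is that every $E$-class is countable. Suppose to the contrary that $\Gamma = [t_0]_E$ is uncountable for some $t_0$. Then for each $t\in\Gamma$ the space $X_t$ is isomorphic to a complemented subspace of $X_{t_0}$, so $X_{t_0}$ is complementably universal for the family $\{X_t\}_{t\in\Gamma}$. But $X_{t_0}$ is a subspace of $Z=\overline{span}\{z_{j,\epsilon}: j\in\N,\ \epsilon=0,1\}$, hence separable, so its density character is $\aleph_0 < |\Gamma|$; this contradicts Lemma~\ref{uncoun}. Therefore each $E$-class is countable, in particular meagre, and so $E$ is a meagre equivalence relation on $2^{\N}$.

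It remains to observe that $E_0' \subseteq E$: if $t E_0' s$ then, exactly as in the proof of Theorem~\ref{criterio}, $X_t$ and $X_s$ are spanned by the same vectors up to finite sets of equal cardinality and hence are isomorphic, so certainly $X_t \approx X_s$. Proposition~\ref{rosendal} now gives $E_0 \leq_B E$, and composing the resulting Borel reduction $2^{\N}\to 2^{\N}$ with the Borel map $t\mapsto X_t$ used in Theorem~\ref{criterio} produces a Borel reduction of $E_0$ into $(\mathcal{SB}(X),\approx)$. (For this to make sense one only needs that $\approx$ is an analytic equivalence relation, which it is, being defined by an existential quantification over the Polish space of operators; the target of a Borel reduction need not itself be Borel.) The only non-routine point in the whole argument is the reduction to Lemma~\ref{uncoun} in the second paragraph — recognizing that a single member of an uncountable $E$-class is already a separable complementably universal space for that class; once that is in place the rest is a transcription of the ergodicity proof.
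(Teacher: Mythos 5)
Your proposal is correct and follows essentially the same route the paper intends: the argument of Theorem~\ref{criterio} carries over verbatim once one notes that a single member of an uncountable complemented-biembeddability class would be a separable complementably universal space for that class, contradicting Lemma~\ref{uncoun}. Your added checks (transitivity of the relation, $E_0'\subseteq E$, analyticity of the target relation) are exactly the routine points the paper leaves implicit.
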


Let  $X$ and $Y$ be two Banach spaces and a constant  $K>0$. Recall that $X$ is said to be {\it $K$-crudely finitely representable} in $Y$ if  for every finite dimensional subspace $F$ of $X$  there exist a linear isomorphism  $T: F \to  T(F)\subseteq Y$  so that $\|T\|\|T^{-1}\|\leq K$.  $X$ is said to be {\it finitely representable} in $Y$ if $X$ is $(1+\epsilon)$-crudely finitely representable  in $Y$ for every $\epsilon>0$. A classical result of Maurey and Pisier  \cite{MP} states that $l_{p(X)}$ and $\ell_{q(X)}$ are finitely representable in $X$, for any Banach space $X$. The following Remark is stated in the classical book \cite{LT}.

\begin{obs} \label{finiterep}
It follows from the proof of  \cite[Theorem 1.a.5]{LT1} that if $\ell_p$ is $K$-crudely finitely representable in $Y$, for some   $1\leq p \leq \infty$, then Y has a subspace $X$  which has a Schauder decomposition into $\{X_n\}_{n=1}^{\infty}$ with $d(X_n, \ell_p^n)\leq K+1$ for every $n\in \N$.
\end{obs}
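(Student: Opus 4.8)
The plan is to run the Mazur selection procedure underlying the proof of \cite[Theorem 1.a.5]{LT1}, choosing at each stage a block almost isometric to $\ell_p^n$ inside a suitably small finite-codimensional subspace of $Y$. First I would record the ``Mazur step'' from that proof: given a finite-dimensional $E\subseteq Y$, a number $\eta>0$, and a closed finite-codimensional $Z\subseteq Y$, there is a closed finite-codimensional $Y_0\subseteq Z$ with $\|e+y\|\ge(1+\eta)^{-1}\|e\|$ for all $e\in E$ and $y\in Y_0$; one takes a finite $\tfrac{\eta}{1+\eta}$-net of the unit sphere of $E$, picks a norming functional for each net point, and intersects their kernels with $Z$. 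In particular $E\cap Y_0=\{0\}$, and the projection of $E\oplus Y_0$ onto $E$ has norm at most $1+\eta$.

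Next I would isolate a finite-dimensional observation that makes $K$-crude finite representability pass to finite-codimensional subspaces: if $V\subseteq\ell_p^N$ has codimension $r$ and $N\ge n(r+1)$, then splitting $\{1,\dots,N\}$ into $n$ blocks of size at least $r+1$, each corresponding coordinate subspace of $\ell_p^N$ meets $V$ nontrivially, so one may pick unit vectors $v_1,\dots,v_n\in V$ with pairwise disjoint supports; then $\operatorname{span}\{v_1,\dots,v_n\}$ is isometric to $\ell_p^n$. Hence, if $\ell_p$ is $K$-crudely finitely representable in $Y$ and $W\subseteq Y$ has finite codimension, then for every $n$ there is a subspace of $W$ at Banach--Mazur distance at most $K$ from $\ell_p^n$: take $u_N\colon\ell_p^N\to Y$ with $\|u_N\|\,\|u_N^{-1}\|\le K$ for $N$ large, pull $W$ back to the subspace $V=u_N^{-1}(W)$ of $\ell_p^N$, which has codimension at most $\operatorname{codim}W$, take the isometric copy of $\ell_p^n$ inside $V$ just described, and push it forward by $u_N$.

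With these ingredients I would construct, by induction on $n$, finite-dimensional subspaces $X_n\subseteq Y$ and closed finite-codimensional subspaces $Y=Y^{(0)}\supseteq Y^{(1)}\supseteq\cdots$ together with scalars $\eta_n>0$, as follows: given $X_1,\dots,X_{n-1}$ and $Y^{(n-1)}$, put $G_{n-1}=X_1\oplus\cdots\oplus X_{n-1}$, apply the Mazur step with $E=G_{n-1}$, $\eta=\eta_n$ and $Z=Y^{(n-1)}$ to obtain $Y^{(n)}\subseteq Y^{(n-1)}$, and then use the previous paragraph to choose $X_n\subseteq Y^{(n)}$ with $d(X_n,\ell_p^n)\le K$ (in particular $\le K+1$). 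Set $G_\infty=\bigcup_n G_n$ and $X=\overline{G_\infty}$. For every $N$ and every $m>N$ one has $X_m\subseteq Y^{(m)}\subseteq Y^{(N+1)}$, hence $\overline{\bigoplus_{m>N}X_m}\subseteq Y^{(N+1)}$; since $Y^{(N+1)}$ was produced by the Mazur step relative to $G_N$, the sum $G_\infty=G_N\oplus\bigoplus_{m>N}X_m$ is algebraically direct and the projection $P_N$ of $G_\infty$ onto $G_N$ along $\overline{\bigoplus_{m>N}X_m}$ satisfies $\|P_N\|\le 1+\eta_{N+1}$. Therefore the $P_N$ extend to uniformly bounded projections on $X$ with $P_NP_M=P_{\min(N,M)}$ and $P_N\to\mathrm{Id}$ strongly, so $(X_n)_n$ is a Schauder decomposition of $X$ with $d(X_n,\ell_p^n)\le K+1$ for all $n$, as required.

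I do not expect a serious obstacle here: the one point that needs attention is the coordination of the two ingredients, namely that at stage $n$ one must produce a copy of $\ell_p^n$ inside the already-shrunken subspace $Y^{(n)}$. This is exactly what the disjoint-support observation provides, and it works precisely because the dimension $N$ of the finitely representable copy $u_N\colon\ell_p^N\to Y$ may be chosen arbitrarily large, in particular much larger than $\operatorname{codim}Y^{(n)}$.
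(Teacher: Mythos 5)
Your proof is correct and follows essentially the route the paper intends: the paper gives no argument beyond citing the proof of \cite[Theorem 1.a.5]{LT1}, and your reconstruction — the block version of the Mazur lemma producing an FDD with projections of norm $1+\eta_n$, combined with the disjoint-support observation that lets $K$-crude finite representability of $\ell_p$ pass to finite-codimensional subspaces — is exactly the standard argument being invoked. The details (codimension count, uniform boundedness and consistency of the $P_N$, and the bound $d(X_n,\ell_p^n)\le K\le K+1$) all check out.
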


\begin{pro}\label{q>2} If $\ell_p$ is crudely  finitely representable in a Banach space $X$ for some $p>2$, then  $X$ satisfies the Cantorized-Enflo criterion.
\end{pro}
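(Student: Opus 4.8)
The plan is to transfer into the present abstract framework the classical construction, due to Davie, of a subspace of $\ell_p$ ($p>2$) failing the approximation property, adding a binary (\emph{Cantorized}) branching at each level. First I would use that $\ell_p$ is $K$-crudely finitely representable in $X$ for some $K$, so that Remark \ref{finiterep} (applied with $X$ in the role of the ambient space) provides a subspace $Y\subseteq X$ with a Schauder decomposition $\{Y_m\}_{m\ge1}$ satisfying $d(Y_m,\ell_p^m)\le K+1$; let $D$ be a constant with $\|Q_m\|\le D$, where $Q_m\colon Y\to Y_m$ are the associated projections. Since $\dim Y_m=m\to\infty$, I can select $m_1<m_2<\cdots$ with $\dim Y_{m_n}$ as large as needed at each stage, and fix inside $Y_{m_n}$ a $(K+1)$-isomorphic copy of $\ell_p^{N_n}$ for any prescribed $N_n\le m_n$. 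All level-$n$ data $\{z_{j,\epsilon},z_{j,\epsilon}^*:j\in I_n,\ \epsilon=0,1\}$ will be built inside that copy; each functional is taken first in $Y_{m_n}^*$, composed with $Q_{m_n}$, and extended to $X^*$ by Hahn--Banach, so that $z_{i,\epsilon}^*$ annihilates every $Y_{m_{n'}}$ with $n'\ne n$. Then biorthogonality across different levels is automatic and has to be arranged only within each block, while $\|z_{j,\epsilon}\|\le1$ and $\|z_{j,\epsilon}^*\|\le C_p$ for a constant $C_p$ depending only on $p$ and $K$.

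Next comes the technical heart of the argument. Inside $\ell_p^{N_n}$, with $N_n$ large, I would adapt Davie's construction (compare Szankowski's, and Remark \ref{wAP}) to produce, for each parity $\epsilon\in\{0,1\}$, a biorthogonal system $\{z_{j,\epsilon},z_{j,\epsilon}^*\}_{j\in I_n}$, with the two parities mutually biorthogonal, in such a way that for \emph{every} pair $\epsilon,\epsilon'\in\{0,1\}$ there is an operator identity
\[
2^{-n}\sum_{j\in I_n} z_{j,\epsilon}^*\otimes z_{j,\epsilon}\ -\ 2^{-(n-1)}\sum_{i\in I_{n-1}} z_{i,\epsilon'}^*\otimes z_{i,\epsilon'}\ =\ \sum_{k=1}^{L_n} g^*_{n,k}\otimes h_{n,k},
\]
where $\|g^*_{n,k}\|\le C_p$, each $h_{n,k}$ lies in $\spa\{z_{j,\epsilon}:j\in I_n\}+\spa\{z_{i,\epsilon'}:i\in I_{n-1}\}$, and $\sum_k\|h_{n,k}\|\le\alpha_n$ with $\sum_n\alpha_n<\infty$ (summability of the $\alpha_n$ being secured, as in Davie's argument, by letting the block dimensions $N_n$ grow sufficiently fast). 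The place where $p>2$ is used — and the reason such an identity is impossible in a Hilbert space — is that the error operator, although of rank $O(2^n)$, has small finite-rank norm $\sum_k\|h_{n,k}\|$: the new directions introduced at level $n$ form an $\ell_p$-block which, for $p>2$, is much flatter than its $\ell_2$ counterpart, and this is precisely the failure of cotype $2$ of $\ell_p$. The genuinely new feature relative to the one-parameter constructions of Davie and Szankowski is that the identity must hold for all four pairs $(\epsilon,\epsilon')$, in particular off the diagonal; this forces a more symmetric set-up at each level, so that both level-$n$ configurations refine both level-$(n-1)$ configurations in a uniformly controlled way.

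It then remains to verify the three clauses of Definition \ref{cEnflo}. Clauses (1) and (3) hold by construction and by the choice of $(\alpha_n)$. For clause (2), fix $t,s\in2^\N$ and a bounded operator $T\colon X_t\to X_s$. Since $z_{j,t(n)}\in X_t$ for all $j\in I_n$ and $n\ge1$, specialising the identity above to $\epsilon=t(n)$, $\epsilon'=t(n-1)$ gives a finite-rank operator whose right-hand side has all $h_{n,k}\in X_t$. Pairing both sides with $T$ — legitimate because the bilinear form $(T,\sum_k a_k^*\otimes b_k)\mapsto\sum_k a_k^*(Tb_k)$ is well defined on finite-rank operators with range in $X_t$ and, by linearity, applied to the left-hand side equals $2^{-n}\sum_{j}z_{j,t(n)}^*(Tz_{j,t(n)})-2^{-(n-1)}\sum_i z_{i,t(n-1)}^*(Tz_{i,t(n-1)})=\beta^n_t(T)-\beta^{n-1}_t(T)$ — yields
\[
\beta^n_t(T)-\beta^{n-1}_t(T)=\sum_{k=1}^{L_n} g^*_{n,k}(Th_{n,k}),
\]
so $|\beta^n_t(T)-\beta^{n-1}_t(T)|\le\|T\|\sum_k\|g^*_{n,k}\|\,\|h_{n,k}\|\le C_p\alpha_n\|T\|$. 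Relabelling $\alpha_n$ as $C_p\alpha_n$ (still summable) gives clauses (2) and (3) at once, and since all vectors and functionals are bounded, $X$ satisfies the Cantorized-Enflo criterion.

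I expect the main obstacle to be the middle step: reproving Davie's (and Szankowski's) estimates while simultaneously keeping track of the extra structure, namely the second, mutually biorthogonal, parity at each level; the off-diagonal version of the telescoping identity; and a representation of the error operator through uniformly bounded functionals $g^*_{n,k}$. The reduction and the final verification are essentially bookkeeping.
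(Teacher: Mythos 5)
Your overall strategy is the one the paper follows: use Remark \ref{finiterep} to place blocks uniformly isomorphic to $\ell_p^{N_n}$ inside $X$, transplant a Cantorized version of Davie's construction into those blocks, and read off clause (2) of Definition \ref{cEnflo} from a telescoping operator identity with small nuclear-type norm. The final bookkeeping step (pairing the identity with $T$ to get $|\beta^n_t(T)-\beta^{n-1}_t(T)|\le C_p\alpha_n\|T\|$) is fine. However, there is a structural error in your set-up, and the step you defer is not merely the hard part but is rendered impossible by that set-up. You confine \emph{all} level-$n$ data to a single block $Y_{m_n}$ and arrange that $z^*_{i,\epsilon}$ annihilates every $Y_{m_{n'}}$ with $n'\neq n$. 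Now test your posited identity against $T=Q_{m_n}|_{X_t}\colon X_t\to X_t$, a bounded operator of norm at most $D$: biorthogonality gives $\beta^n_t(T)=2^{-n}\sum_{j\in I_n}z^*_{j,t(n)}(z_{j,t(n)})=1$, while $\beta^{n-1}_t(T)=0$ because $Q_{m_n}$ kills $Y_{m_{n-1}}$. So $|\beta^n_t(T)-\beta^{n-1}_t(T)|=1$ for every $n$, which is incompatible with $\sum_n\alpha_n<\infty$; no choice of $g^*_{n,k}$, $h_{n,k}$ with $\sum_k\|g^*_{n,k}\|\,\|h_{n,k}\|\to0$ can represent the left-hand side. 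The cancellation in Davie's construction comes precisely from consecutive levels \emph{sharing} coordinates: the vectors of level $n$ must be supported in two consecutive blocks (in the paper's notation, $e^k_j$ lives in $X_{k-1}$ and $X_k$), so that the level-$n$ and level-$(n-1)$ rank-one sums interfere on the common block.

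The second issue is that the genuine content of the proposition --- the existence, inside $\ell_p$ ($p>2$), of the two mutually biorthogonal parities at each level together with the telescoping estimate valid for all four pairs $(\epsilon,\epsilon')$ --- is exactly what you flag as ``the main obstacle'' and do not carry out. This is not something to re-derive from Davie's one-parameter construction by general principles: it is the content of Johnson and Szankowski's Section IV of \cite{JS}, where Davie's construction is already branched over $t\in 2^{\N}$ and the key bound $\|\Phi^{k,t}_l\|\le A(n+1)^{1/2}2^{-n(p-2)/2p}$ (summable precisely because $p>2$) is proved uniformly in $t$. The paper's proof consists of observing that the Johnson--Szankowski vectors $e^{n+1}_{j+\epsilon 2^n}$ and functionals $\alpha^{n+1}_{j+\epsilon 2^n}$ directly furnish the $z_{j,\epsilon}$, $z^*_{j,\epsilon}$ of Definition \ref{cEnflo}, and that the construction uses only the Schauder decomposition of $\ell_p$ into the blocks $\ell_p^{3\cdot2^n}$, hence transfers verbatim to the decomposition supplied by Remark \ref{finiterep}. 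Your heuristic that the smallness of the error term reflects the failure of cotype $2$ points in the right direction, but by itself it neither produces the coefficients (group characters in Davie's argument) nor the off-diagonal compatibility you correctly identify as essential.
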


\begin{proof}  The proof  of  Johnson and Szankowski  \cite[Section IV]{JS}   that there does not exist separable Banach space which is complementably universal for the class of subspaces of $\ell_p$ ($2<p<\infty$) is  by modifying  Davie's construction of a subspace of $\ell_p$ ($2<p<\infty$) failing AP. 
We  observe that the Johnson and Szankowski construction 
yields that $\ell_p$ ($2<p<\infty$) satisfies the Cantorized-Enflo criterion.

Indeed, fix $p>2$. For every $n\in \N$,  we denote by $(f_j^n)_{j=1}^{3.2^n}$ the unit vector basis of $\ell_p^{3.2^n}$.  Using the notation of \cite{JS, D}, let  for $j\in I_n$ and $\epsilon=0,1$ 
$$z_{j,\epsilon}= e^{n+1}_{j + \epsilon 2^n},$$
$$z_{j,\epsilon}^*= \alpha^{n+1}_{j+ \epsilon 2^n},$$
where the vectors  $e^k_j$ defined in \cite{JS} have the form:
$$ e^k_j= \sum_{l=1}^{3.2^{k-1}} \lambda_j^k(l)f^k_l  + \sum_{l=1}^{3.2^k} \delta_j^k(l) f^k_l. 
$$
Also the functionals $\alpha^k_j$ are  linear combinations of the biorthogonal functionals $(f_l^{k^*})_{l=1}^{3.2^k}$ of $\ell_q^{3.2^k}$, satisfying $\alpha^k_l(e^i_j)=\delta_{ki}\delta_{lj}$. For any operator $T: X_t \to \ell_p$, 
$$ \left | \beta_t^n (T)- \beta_t^{n-1}(T) \right | \leq \sup \{ \|T\Phi^{k,t}_l\|, \, l\in F_n\},
$$
for some vectors $\Phi^{k,t}_l$ and a finite set $F_n$, where $\| \Phi^{k,t}_l\|\leq A (n+1)^{1/2}2^{-n(p-2)/2p}$, uniformly on $t$ and $l$.  Therefore $\ell_p$  satisfies the Cantorized-Enflo criterion.

Actually, we notice that  the previous construction only uses that $\ell_p$ has a natural Schauder decomposition into $\{\ell_p^{3.2^n}\}_{n=2}^{\infty}$. Therefore, if $\ell_p$ $(p>2)$ is crudely finitely representable in $X$,  then using  the  Remark \ref{finiterep},   there exist a constant $K>0$ and a subspace $Y$ of $X$ admitting a Schauder decomposition into $\{ X_n\}_{n=1}^{\infty }$, such that $d(X_n, \ell_p^{3.2^n})\leq K$. Hence, the analogous construction of vectors $e^k_j$ and $\alpha^k_j$ can be done as vectors supported in $X_{k-1}$ and $X_k$.
\end{proof}

\begin{cor} If  $\ell_p$ ($p>2$)  is crudely finitely representable in $X$, then $X$ is ergodic. 
\end{cor}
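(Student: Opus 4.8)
The plan is to deduce this corollary directly from the two preceding results, with only a small amount of bookkeeping about separability. First I would apply Proposition~\ref{q>2}: since $\ell_p$ with $p>2$ is crudely finitely representable in $X$, that proposition yields that $X$ satisfies the Cantorized-Enflo criterion. What is important for us is not just the bare statement but the form of its proof: by Remark~\ref{finiterep} one extracts a constant $K>0$ and a \emph{separable} subspace $Y\subseteq X$ carrying a Schauder decomposition $\{X_n\}_{n=1}^\infty$ with $d(X_n,\ell_p^{3\cdot 2^n})\le K$, and the biorthogonal systems $(z_{j,\epsilon})_{j,\epsilon}$ in $Y$, $(z_{j,\epsilon}^*)_{j,\epsilon}$ in $Y^*$, together with the summable scalars $(\alpha_n)_n$ witnessing conditions (1)--(3) of Definition~\ref{cEnflo}, are all built inside this $Y$. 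Hence $Y$ is a separable Banach space satisfying the Cantorized-Enflo criterion.

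Next I would invoke Theorem~\ref{criterio} for $Y$: it gives $(2^\N,E_0)\leq_B(\mathcal{SB}(Y),\simeq)$, i.e. $Y$ is ergodic. Finally, as noted in the introduction, any Banach space containing an ergodic subspace is itself ergodic — concretely, the inclusion $\mathcal{SB}(Y)\hookrightarrow\mathcal{SB}(X)$ is Borel and preserves the isomorphism relation, so composing it with the reduction for $Y$ exhibits $(2^\N,E_0)\leq_B(\mathcal{SB}(X),\simeq)$. Therefore $X$ is ergodic (understanding, as everywhere ergodicity is discussed, that $X$ is separable; in the non-separable case the statement is vacuous, or one may simply record that $X$ has the ergodic separable subspace $Y$).

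There is essentially no obstacle here, since the work is entirely carried out in Proposition~\ref{q>2} and Theorem~\ref{criterio}. The only point deserving a moment's attention is to make explicit that the Cantorized-Enflo data produced in the proof of Proposition~\ref{q>2} already resides in a separable subspace, so that Theorem~\ref{criterio}, stated for separable spaces, can be applied verbatim; everything else is immediate.
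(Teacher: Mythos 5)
Your proof is correct and follows exactly the route the paper intends: combine Proposition~\ref{q>2} with Theorem~\ref{criterio}, and your extra care about locating the Cantorized-Enflo data in a separable subspace (plus the fact that a space with an ergodic subspace is ergodic, noted in the introduction) is a harmless and sensible refinement. Nothing further is needed.
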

We observe that the construction of  Johnson and Szankowski \cite[Section IV]{JS} satisfies   the Cantorized-Enflo criterion in the form of  Remark  \ref{wAP},  so  each  of the $X_t$ constructed fails the AP.

\section{Case $p(X)<2$}

In this section we prove ergodicity for separable Banach spaces such that $p(X)<2$.  The particular case for the $\ell_p$'s spaces ($1\leq p <2$) was proved by Ferenczi and Galego \cite{FG},  where they actually reduce the relation $E_{K_\sigma}$ and use only subspaces with unconditional bases. Their approach relies on certain lower estimates on successive
vectors which have no reason to hold in the case when $\ell_p$ is only crudely finitely representable on $X$.

Our approach is to obtain the `Cantorized version'  of the subspaces of $\ell_p$ ($1\leq p<2$)  without AP constructed by Szankowski \cite{S}. The advantage of this method is that  the nature of that construction  allows to pass the Cantorized-Enflo criterion from $\ell_p$ to  a Banach space  $X$ for which $\ell_p$ is crudely finitely representable in $X$. 

Before the proof, we need  to define the following functions $f_k:\N \to \N$, $k\leq 8$, $g_k:\N \to \N$, $k\leq 15$, $h_k:\N \to \N$, $k\leq 32$  to encode the support of some vectors used in that construction. 
The main difference with \cite{S} is that our construction uses  vectors with support  of length  twelve  instead of six of the original one.  

\

$f_k(16i+l)= 8i+k-1$, $i= 2,3,4, \ldots$  $\; \; \; 0\leq l \leq 15$, $\; \; \;  1\leq k\leq 8$

$g_k(16i+l)= 16i+ (l+k) \, mod \, 16$, \, \, \, \,  $i= 2,3,4, \ldots$  $\; \; 0\leq l \leq 15$, $\; \; 1\leq k \leq 15$

$h_k(16i+l)= 32i+k-1$, $i= 2,3,4, \ldots$  $\; \; \; 0\leq l \leq 3$, $\; \; \; 1\leq k\leq 32$

\

We denote  by $I_n^j=\{k\in I_n \, : \, k\cong j \, (mod \, 16)\}$, $j=0,1,2\ldots, 15$. The following is a modified version  of  the key  Szankowski  combinatorial argument \cite{S} (See also  \cite[Prop 1.g.5 ]{LT}) adapted to our set of functions $\{f_k,\,  g_k, \, h_k\}$.
\begin{lem}\label{partition} There exist partitions $\Delta_n$ and $\nabla_n$ of $I_n$  into disjoint sets and a sequence of integers $(m_n)_n$ with $m_n\geq 2^{n/32-1}$, $n=2,3,\ldots$ such that

\begin{enumerate}
\item For every $A\in \nabla_n$, $m_n\leq |A| \leq 2m_n$ and it is contained in some $I_n^j$.
\item For every $A \in \nabla_n$ and every $B\in \Delta_n$, $|A\cap B| \leq 1$.
\item For every $A\in \nabla_n$ and every function $\xi$  in  $\{f_k,\,  g_k, \, h_k\}$,  the set $\xi(A)$  is contained entirely in an element of $\Delta_{n-1}, \Delta_n$ or $\Delta_{n+1}$.
\end{enumerate}

\end{lem}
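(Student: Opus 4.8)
The plan is to adapt the combinatorial core of Szankowski's construction \cite{S}, in the form of \cite[Prop.~1.g.5]{LT}, to the arithmetic of the maps $f_k$ ($1\le k\le8$), $g_k$ ($1\le k\le15$), $h_k$ ($1\le k\le32$). First I would pass to reduced indices: write $N\in I_n$ uniquely as $N=16\iota+\rho$ with $\rho\in\{0,\dots,15\}$, so that $N\in I_n^{\rho}$, and $\iota=\lfloor N/16\rfloor\in[2^{n-4},2^{n-3})=:T_n$. A direct check shows that each of $f_k,g_k,h_k$ acts on $\iota$ by a single dyadic step --- $\iota\mapsto\lfloor\iota/2\rfloor$ for $f_k$ (image in $T_{n-1}$), $\iota\mapsto\iota$ for $g_k$ (image in $T_n$), $\iota\mapsto2\iota$ or $2\iota+1$ for $h_k$ (image in $T_{n+1}$) --- while it overwrites $\rho$ by a value that becomes constant once the class $I_n^j$ of the argument is fixed, running only through $\{0,\dots,7\}$ for $f_k$, with $h_k$ defined only on the classes $I_n^j$ for $j\le3$. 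Since each $\nabla_n$-block is to lie inside one $I_n^j$, it has the form $A=S\times\{j\}$ with $S\subseteq T_n$, and then $\xi(A)=\Xi(S)\times\{\text{const}\}$, where $\Xi(S)$ is $\{\lfloor\iota/2\rfloor:\iota\in S\}$, or $S$, or $\{2\iota:\iota\in S\}$, or $\{2\iota+1:\iota\in S\}$. So the lemma reduces to producing, for each $n$, a partition $\Delta_n$ of $T_n\times\{0,\dots,15\}$ and, for each $j$, a partition $\mathcal H^{(j)}_n$ of $T_n$ into blocks of size in $[m_n,2m_n]$ such that: (i) each block of $\mathcal H^{(j)}_n$ meets every $j$-slice of a $\Delta_n$-block in at most one point; (ii) each such block lies in a single $r$-slice of a $\Delta_n$-block for every $r\ne j$; and (iii) for each block $S$, the set $\{\lfloor\iota/2\rfloor:\iota\in S\}$ lies in a single $r$-slice of a $\Delta_{n-1}$-block for all $r\le7$, and, when $j\le3$, $\{2\iota:\iota\in S\}$ and $\{2\iota+1:\iota\in S\}$ each lie in a single $r$-slice of a $\Delta_{n+1}$-block for all $r$.

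To build these I would put a coordinate structure $T_n\cong(\mathbb Z/m_n)^{16}\times R_n$ on each level, writing $\iota=(x_0,\dots,x_{15};\zeta)$; here $R_n$ is a residual coordinate with $|R_n|\ge m_n$ whose role is to receive the images of the small coordinates under doubling (a residual coordinate is genuinely needed, since (iii) forces $\{2\iota:\iota\in S\}$, a set of $m_n$ distinct points, to have all sixteen $x$-coordinates constant at level $n+1$). Take $\mathcal H^{(j)}_n$ to be the coordinate-$j$ lines $\{\iota: x_i=c_i\ (i\ne j),\ \zeta=c\}$, each of size $m_n$, and let $\Delta_n$ consist of the $m_n$ blocks $B^{(c)}=\bigsqcup_{r=0}^{15}\{x_r=c\}\times\{r\}$, $c\in\mathbb Z/m_n$. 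Then the size requirement is immediate, and (i) and (ii) are a one-line check: a coordinate-$j$ line meets $\{x_j=c\}$ in one point and lies inside $\{x_r=c_r\}$ whenever $r\ne j$. For (iii) one must choose the identifications at consecutive levels compatibly with the dyadic maps: halving should send the variation in any coordinate $x_j$ of $T_n$ into the variation in $x^{(n-1)}_8,\dots,x^{(n-1)}_{15}$ and $\zeta^{(n-1)}$ (never into $x^{(n-1)}_0,\dots,x^{(n-1)}_7$), and the two doubling maps should send the variation in any of $x_0,\dots,x_3$ of $T_n$ entirely into the variation in $\zeta^{(n+1)}$. Concretely this amounts to grouping the binary digits of the elements of $T_n$ into sixteen coordinate blocks plus a residual block, positioned so that the single-digit shift induced by halving or doubling carries each block admissibly into the block structure of the neighbouring level, the low coordinates and the residual block absorbing the shift.

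The delicate point is exactly this last step: realising all the coordinate structures simultaneously and coherently, so that one dyadic shift sends each coordinate direction at level $n$ into an admissible region of the structure at level $n\pm1$. This is where the technical weight of Szankowski's argument sits, and where enlarging the support length from $6$ to $12$ (hence passing from residues mod $3$ to mod $16$, and to the families $f_k,g_k,h_k$) forces one to re-examine all $8+15+32$ maps and the boundary cases --- the restriction $i\ge2$ in the definitions, small $n$, and the classes $I_n^j$ with $j>3$ that lie outside the domain of the $h_k$. As for the rate, the sixteen coordinate blocks of size $m_n$ and the residual block must fit inside the $\approx n$ binary digits of $T_n$, so $m_n$ may be taken to grow geometrically; allowing for the slack consumed by keeping the digit-regrouping coherent across all levels, the choice $m_n\ge2^{n/32-1}$ is safe, and all that is needed downstream is that $m_n\to\infty$ fast enough to make $\sum_n\alpha_n$ converge in the applications. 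Granted the partitions, properties (1)--(3) hold by construction.
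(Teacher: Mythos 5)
Your reduction to a coordinate picture is the right instinct and matches the general shape of the paper's argument, but there are two problems. First, a computational slip: for $f_k$ the image of $N=16\iota+l$ is $8\iota+k-1$, whose residue mod $16$ is $8(\iota\bmod 2)+k-1$; it depends on the \emph{parity} of $\iota$, not only on the class $I_n^j$ of the argument as you assert. So for $f_k(A)$ to lie in a single $\Delta_{n-1}$-block you need either the parity of $\iota$ to be constant on each $\nabla_n$-block, or an artificial coupling between the $(k-1)$-st and $(k+7)$-th coordinates at level $n-1$; your scheme provides neither. Second, and more seriously, the whole content of the lemma is the coherent choice of coordinate structures across consecutive levels, and you explicitly defer exactly this step ("this is where the technical weight sits"). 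The static prescription of sixteen digit-blocks plus a residual block, "positioned so that the single-digit shift carries each block admissibly," is asserted rather than constructed, and the constraints it must satisfy chain through all levels at once; as written the outline is a plan, not a proof.

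The paper resolves both points simultaneously with Szankowski's alternation trick, which is the ingredient missing from your proposal. One represents $I_n^0$ inductively as a two-factor product $C_n\times D_n$ so that each doubling map $\psi_n^r(k)=2k+16r$ carries \emph{columns} $C_n\times\{d\}$ onto \emph{rows} $\{c\}\times D_{n+1}$, and every row at level $n$ is the image of a single column at level $n-1$ under a single $\psi_{n-1}^r$; then $D_n$ is split into sixteen balanced sub-factors, $\nabla_n$-blocks are lines in one sub-factor direction (hence contained in a single row), and $\Delta_n$-blocks are the complementary hyperplanes. Because each row comes from one column under one $\psi_{n-1}^r$, the parity of $\iota$ is constant on every $\nabla_n$-block, which repairs the $f_k$ issue; and because rows and columns swap roles under one dyadic step, the image of a line under any of $f_k,g_k,h_k$ has all transverse coordinates constant and so sits inside a single $\Delta$-block at the appropriate level. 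Supplying this alternating product structure (and checking the three conditions against it) is what your write-up still has to do.
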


\begin{proof} 
Consider the functions $\varphi^j_n: I_n^0 \to I_n^j$  given by $\varphi_n^j(k)=k+j$. For $n\geq 4$ and $r=0,1$ we let $\psi_n^r: I_n^0 \to I_{n+1}^0$ the map defined by $\psi_n^r(k)=2k+16r$. The above functions are 1-1 and have disjoint ranks with $I_{n+1}^0= \psi_n^0(I_n^0)\cup \psi_n^1(I_n^0)$.

Inductively, for $n\geq 4$ we can represent $I_n^0$ as the cartesian product  $C_n\times D_n$, where $|D_{n+1}|=|C_n|$, $|C_{n+1}|=2|D_n|$ and such that

\begin{enumerate}
\item For every $c\in  C_{n+1}$ there exist $d\in D_n$ and $r=0,1$ such that $\psi_n^r( C_n\times \{d\}) = \{c\} \times D_{n+1}$. 

\item For every $d\in D_{n+1}$ there exists $c\in C_n$ such that $\psi_n^0\cup \psi_n^1 (\{c\}\times D_n)= C_{n+1}\times \{d\}$. 
\end{enumerate}
This means that the functions $\psi_n^r$ send columns of $C_n\times D_n$ onto rows of $C_{n+1}\times D_{n+1}$ in a way that every column of $C_{n+1}\times D_{n+1}$ is the image of a row of $C_n\times D_n$ by  $\psi_n^0\cup \psi_n^1$.  Notice that $|C_n|, |D_n| \geq 2^{n/2-2}$. 

Now  we split each $D_n$ as a cartesian product of sixteen  factors $D_n = \prod_{l=0}^{15} D_n^l$ such that $$|D_n^0| \leq |D_n^1|\leq \ldots \leq |D_n^{15}|\leq 2|D_n^0|$$

The partitions are then defined as 

$$ \nabla_n = \left \{  \varphi_n^l  \left (\{c\} \times D_n^l \right) \, : \, c\in C_n\times \prod_{i\neq l} D_n^i, \, 0\leq l \leq 15\right \},
$$

$$ \Delta_n= \left \{ \varphi_n^l \left(C_n \times \prod_{i\neq l} D_n^i \times \{d\}\right) \, : \, d\in D_n^l, \, 0\leq l \leq 15 \right \}.
$$
The conditions 1), 2) and 3) are satisfied in the same way as \cite{S}
\end{proof}

\

\begin{teo} \label{p<2} If $\ell_p$  is  crudely finitely  representable in a Banach space $X$, for some $1\leq p<2$, then $X$  satisfies the Cantorized-Enflo criterion. 
\end{teo}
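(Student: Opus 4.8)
The plan is to ``Cantorize'' Szankowski's construction \cite{S} of a subspace of $\ell_p$ ($1\le p<2$) without the approximation property (see also \cite[Prop.~1.g.5]{LT}), exactly as the proof of Proposition~\ref{q>2} Cantorizes Davie's construction for $p>2$. Two changes are made to Szankowski's original. First, the supports of the basic vectors are lengthened from six to twelve: this is the role of the function families $\{f_k,g_k,h_k\}$ and of the partitions $\Delta_n,\nabla_n$ of Lemma~\ref{partition}, which replace the length-six combinatorial scheme of \cite{S}. Second, each basic vector is split into two ``congruent'' copies indexed by a label $\epsilon\in\{0,1\}$ living in complementary halves of the ambient space, so that the trace inequality of condition~(\ref{A}) in Definition~\ref{cEnflo} can be established for operators $T\colon X_t\to X_s$ with $t,s\in 2^\N$ arbitrary, and not just for operators on one fixed space.

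I would first fix a concrete model inside $X$. Since $\ell_p$ is $K$-crudely finitely representable in $X$, Remark~\ref{finiterep} produces a subspace $Y\subseteq X$ with a Schauder decomposition $\{Y_n\}_{n\ge1}$ and $d(Y_n,\ell_p^{d_n})\le K+1$ for all $n$. Re-grouping consecutive blocks and passing to a subsequence I may assume $d_n=2N_n$, where $N_n$ is the number of coordinates used at level $n$ of the length-twelve Szankowski scheme, so $N_n$ is of order $2^n$. For each $n$ fix an isomorphism carrying the unit vector basis of $\ell_p^{d_n}$ onto a sequence $(f^n_i)_i$ in $Y_n$ with biorthogonal functionals $(f^{n*}_i)_i$, normalized so that $\|f^n_i\|\le K+1$, $\|f^{n*}_i\|\le1$ and $f^{n*}_i(f^m_j)=\delta_{nm}\delta_{ij}$ (the $f^{n*}_i$ extended to $X^*$ by Hahn--Banach); and split the coordinate set of $\ell_p^{d_n}$ into two halves $H^0_n\sqcup H^1_n$, the label $\epsilon$ selecting $H^\epsilon_n$.

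Next I would write the Cantorized vectors. For $n$ large, $j\in I_n$ and $\epsilon\in\{0,1\}$ put
\[
z_{j,\epsilon}=f^{\,n}_{\iota(j,\epsilon)}-\tfrac{1}{12}\sum_{\xi\in\Xi(j)}f^{\,m(\xi)}_{\iota(\xi(j),\epsilon)},
\]
where $\Xi(j)$ is the set of the twelve functions among $\{f_k,g_k,h_k\}$ assigned to $j$ (determined by $j\bmod16$ through the blocks $I_n^r$), $m(\xi)\in\{n-1,n,n+1\}$ according as $\xi$ is of $f$-, $g$- or $h$-type, and $\iota(\cdot,\epsilon)$ embeds the relevant index into the half $H^\epsilon_{\cdot}$ of the appropriate block; thus $z_{j,\epsilon}\in Y_{n-1}\oplus Y_n\oplus Y_{n+1}$. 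The point of $\epsilon$ is that $z_{j,\epsilon}$ and $z_{j',\epsilon'}$ with $\epsilon\ne\epsilon'$ are disjointly supported, whereas $z_{j,0}$ and $z_{j,1}$ have the same shape, so the construction at level $\epsilon=0$ and at level $\epsilon=1$ sits in complementary halves of $Y$. Let $z^*_{j,\epsilon}$ be the associated Szankowski functional (a linear combination of the $f^{m*}_{\cdot}$ supported on the halves $H^\epsilon_{\cdot}$), renormalized so that $z^*_{i,\epsilon}(z_{j,\tau})=\delta_{ij}\delta_{\epsilon\tau}$, which is condition~(1); boundedness of $(z_{j,\epsilon})$ and $(z^*_{j,\epsilon})$ is immediate from $d(Y_n,\ell_p^{d_n})\le K+1$.

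The core of the proof, and the step I expect to be the main obstacle, is condition~(\ref{A}) with a summable $(\al)$. Fix $t,s\in 2^\N$ and $T\colon X_t\to X_s$, and expand $\beta^n_t(T)-\beta^{n-1}_t(T)$ after substituting the defining formula for the $z_{j,\epsilon}$. Property~(3) of Lemma~\ref{partition} --- each $\xi\in\{f_k,g_k,h_k\}$ sends an atom of $\nabla_n$ into a single atom of $\Delta_{n-1}$, $\Delta_n$ or $\Delta_{n+1}$ --- together with properties~(1) and~(2) allows the two telescoping sums to be matched termwise and rewritten as a finite sum of inner products $z^*_{a(\ell)}\,T\Phi^{n,t}_\ell$, $\ell\in F_n$, where each $\Phi^{n,t}_\ell$ is an average over some $A\in\nabla_n$ (with $m_n\le|A|\le2m_n$) of differences of the vectors $z_{\cdot,t(\cdot)}$. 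The $\ell_p$-averaging estimate of \cite{S} then bounds $\|\Phi^{n,t}_\ell\|$, uniformly in $t$ and $\ell$, by a quantity of order $m_n^{-(1/p-1/2)}$ up to a sub-exponential factor; since $p<2$ the exponent $1/p-1/2$ is positive and $m_n\ge2^{n/32-1}$, so $\al:=\sup_{\ell\in F_n}\|\Phi^{n,t}_\ell\|$ is summable, giving conditions~(\ref{A}) and~(\ref{B}). Two points need care. First, the whole computation is insensitive to the target $X_s$ of $T$: $\beta^n_t$ pairs $T$ only against the diagonal functionals $z^*_{j,t(n)}$, and the telescoping is an algebraic identity among the $z_{j,\epsilon}$, valid for any codomain --- this is precisely what the Cantorized labels buy us. Second, transporting the construction through the blocks $Y_n\subseteq X$ in place of $\ell_p^{d_n}$ itself costs only a fixed power of $K+1$ in every estimate, absorbed into the constants. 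As in Remark~\ref{wAP} the bound in fact has the form $\sup\{\|Tz\|:z\in F_n\}$ with $\sum_n\sup\{\|z\|:z\in F_n\}<\infty$, so each $X_t$ also fails the approximation property; by Theorem~\ref{criterio}, $X$ is ergodic.
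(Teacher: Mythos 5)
Your overall strategy is the right one and matches the paper's: transport Szankowski's scheme into the blocks given by Remark \ref{finiterep}, use the partitions of Lemma \ref{partition}, and obtain $\alpha_n$ of order $m_n^{1/2-1/p}$, summable because $p<2$. However, there is a genuine gap at the heart of the argument: the verification of condition (\ref{A}) of Definition \ref{cEnflo} is asserted rather than proved, and your ansatz for the vectors does not obviously support it. The entire Enflo--Szankowski mechanism rests on the fact that the functional $z^*_{i,\epsilon}$, \emph{restricted to $Z$}, admits several distinct representations in terms of the coordinate functionals $x^*_j$ --- some in terms of level-$n$ coordinates and some in terms of level-$(n+1)$ coordinates --- so that in the difference $\beta^n_t(T)-\beta^{n-1}_t(T)$ one can choose, for each of the four cases $(t(n),t(n-1))\in\{0,1\}^2$, representations that telescope into a sum $2^{-n-1}\sum_j x_j^* T(y_j)$ with each $y_j$ a short signed combination of $z$'s whose supports are controlled by $\{f_k,g_k,h_k\}$ and Lemma \ref{partition}. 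In the paper this forces a very specific shape on $z_{i,\epsilon}$: four signed level-$n$ coordinates forming two differences (indices $8i,\dots,8i+3$ for $\epsilon=0$, indices $8i+4,\dots,8i+7$ for $\epsilon=1$) plus eight unsigned level-$(n+1)$ coordinates interleaved by residue mod $16$, chosen exactly so that $z^*_{i,\epsilon}=\tfrac12(x^*_{8i+4\epsilon}-x^*_{8i+4\epsilon+1})|_Z$ also equals $\tfrac12(x^*_{8i+4\epsilon+2}-x^*_{8i+4\epsilon+3})|_Z$ and two different quarter-sums of four level-$(n+1)$ functionals. Your vectors $z_{j,\epsilon}=f^n_{\iota(j,\epsilon)}-\tfrac1{12}\sum_\xi f^{m(\xi)}_{\iota(\xi(j),\epsilon)}$ have a different shape (one leading coordinate minus an average), you use the functions $f_k,g_k,h_k$ to describe the support of $z_{j,\epsilon}$ itself rather than of the telescoped differences $y_j$ (which in the paper involve $4+9+8=21$ coordinates each), and you never exhibit the multiple representations of $z^*_{j,\epsilon}$ on $Z$ that make ``the two telescoping sums match termwise.'' Without that identity, the claim that $\beta^n_t(T)-\beta^{n-1}_t(T)$ is small relative to $\|T\|$ has no basis, since a priori each of $\beta^n_t(T)$ and $\beta^{n-1}_t(T)$ is only $O(\|T\|)$.

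A secondary, fixable point: your picture of two complementary halves $H^0_n\sqcup H^1_n$ is too coarse. In the paper the $\epsilon=0$ and $\epsilon=1$ copies are disjointly supported but interleaved by residue classes mod $16$, and this interleaving is what produces the four index sets $I_n(\epsilon,\delta)$ (each a union of four residue classes) on which the telescoped sum lives, matching condition (1) of Lemma \ref{partition} (atoms of $\nabla_n$ lie inside a single $I_n^j$). Once the algebraic identities are straightened out, the rest of your outline --- Rademacher averaging over atoms of $\nabla_{n+3}$, the bounds $\bigl\|\sum_{j\in A}\theta_jx^*_j\bigr\|\le K(2m_{n+3})^{1/q}$ and $\bigl\|\sum_{j\in A}\theta_jx_{\xi(j)}\bigr\|\le K(2m_{n+3})^{1/2}$, and the count $|\nabla_n(\epsilon,\delta)|\le 2^{n-2}m_n^{-1}$ --- does go through exactly as in the paper, and the conclusion via Remark \ref{wAP} and Theorem \ref{criterio} is correct.
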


\begin{proof}
Let $X$ be a Banach space such that $\ell_p$ is crudely finitely representable, for some $1\leq p<2$. For every $n\in \N$, we fix $\Delta_n$ and $\nabla_n$  partitions of $I_n$  obtained by Lemma \ref{partition}.   It follows by Remark \ref{finiterep} that there exist a constant $K>0$ and a subspace $Y$ of $X$ admitting a Schauder decomposition into $\{X_n\}_{n=1}^{\infty}$ such that $d(X_n, \ell_p^{2^n})\leq K$, for every $n\in \N$.
Let $(x_j)_{j=1}^{\infty}$ be a bounded sequence of vectors in $Y$ with  $x_j \in X_n$ when $j\in I_n$  such that  for  every $n$
\begin{equation} \label{normv} K^{-1} \left(  \sum_{B\in \Delta_n} \left ( \sum_{j\in B} |a_j|^2 \right )^{p/2}\right )^{1/p} \leq \left \| \sum_{j\in I_n} a_jx_j \right \| \leq K \left(  \sum_{B\in \Delta_n} \left ( \sum_{j\in B} |a_j|^2 \right )^{p/2}\right )^{1/p},
\end{equation}
for any sequence of scalars $(a_j)_{j=1}^{\infty}$.  Let $(x_j^*)_{j=1}^{\infty}$ be a sequence of functionals in $Y^*$, such that $x_j^*(x_i)=\delta_{ij}$  for all $i,j\in \N$ and 
\begin{equation}  \label{normf} K^{-1} \left(  \sum_{B\in \Delta_n} \left ( \sum_{j\in B} |b_j|^2 \right )^{q/2}\right )^{1/q} \leq \left \| \sum_{j\in I_n} b_jx_j^* \right \|_{Y^*} \leq K \left(  \sum_{B\in \Delta_n} \left ( \sum_{j\in B} |b_j|^2 \right )^{q/2}\right )^{1/q}
\end{equation}
for any sequence of scalars $(b_j)_{j=1}^{\infty}$ and every $n$, where $1/p+1/q=1$. \\
We now define  the  sequence of vectors  $(z_{i, \epsilon})_i$, $\epsilon=0,1$  in $Y$ by setting:
$$
z_{i,0}  =  (x_{8i}- x_{8i+1}) +  (x_{8i+2}- x_{8i+3})+ x_{16i}+ x_{16i+1}+ x_{16i+4}+x_{16i+5}+x_{16i+8}+x_{16i+9}+x_{16i+12}+x_{16i+13}
$$
$$
z_{i,1}  =  (x_{8i+4}- x_{8i+5}) +  (x_{8i+6}- x_{8i+7})+ x_{16i+2}+ x_{16i+3}+ x_{16i+6}+x_{16i+7}+x_{16i+10}+x_{16i+11}+x_{16i+14}+x_{16i+15}
$$

Recall that $Z=  \overline{span} \{ z_{j,  \epsilon}: \, j\in \N, \, \epsilon=0,1 \}$.
Notice that  for every $i\in \N$,
\begin{align*}
(x_{8i}^*- x_{8i+1}^*)_{|Z}=  (x_{8i+2}^*- x_{8i+3}^*)_{|Z} &= 1/2(x_{16i}^*+x_{16i+1}^*+ x_{16i+8}^*+x_{16i+9}^*)_{|Z} \\
&=1/2(x_{16i+4}^*+x_{16i+5}^*+ x_{16i+12}^*+ x_{16i+13}^*)_{|Z}.
\end{align*}
Indeed,  all  four formulas give 2 when evaluated on $z_{i,0}$ and give 0 when evaluated on $z_{j,\epsilon}\neq z_{i,0}$. Analogously, for every $i\in \N$,
\begin{align*}
(x_{8i+4}^*- x_{8i+5}^*)_{|Z}=  (x_{8i+6}^*- x_{8i+7}^*)_{|Z} &= 1/2(x_{16i+2}^*+x_{16i+3}^*+ x_{16i+10}^*+x_{16i+11}^*)_{|Z} \\
&=1/2(x_{16i+6}^*+x_{16i+7}^*+ x_{16i+14}^*+ x_{16i+15}^*)_{|Z}.
\end{align*}
All  four formulas above give 2 when evaluated on $z_{i,1}$ and 0 when evaluated on $z_{j,\epsilon}\neq z_{i,1}$.
We define the sequence of functionals $(z_{n, \epsilon}^*)_{n\in \N}$, $\epsilon=0,1$ on $Z^*$ by setting
$$ z_{i,\epsilon}^*= 1/2 (x_{8i+4\epsilon}^*- x_{8i+4\epsilon+1}^*)_{|Z}. 
$$
Hence, 
\begin{align*}z_{i,0}^*=1/2(x_{8i+2}^*- x_{8i+3}^*)_{|Z}&= 1/4(x_{16i}^*+x_{16i+1}^*+ x_{16i+8}^*+x_{16i+9}^*)_{|Z}\\
&= 1/4(x_{16i+4}^*+x_{16i+5}^*+ x_{16i+12}^*+ x_{16i+13}^*)_{|Z}, 
\end{align*}
\begin{align*}z_{i,1}^*=1/2 (x_{8i+6}^*- x_{8i+7}^*)_{|Z} &= 1/4(x_{16i+2}^*+x_{16i+3}^*+ x_{16i+10}^*+x_{16i+11}^*)_{|Z} \\
&=1/4(x_{16i+6}^*+x_{16i+7}^*+ x_{16i+14}^*+ x_{16i+15}^*)_{|Z}.
\end{align*}
For $t\in 2^\N$,  recall that  $X_t= \overline{span} \{ z_{j,  t(n)} \, : \, j\in I_n, \, n\in \N \} $.
If $T:X_t \to Z$ is a linear and bounded operator, the $n$-trace of $T$ has been defined as
\[ \beta^n_{t}(T) = 2^{-n} \sum_{j\in I_n} z_{j,t(n)}^*T(z_{j,t(n)}). 
\]

We need to verify that the $\beta_n's$ satisfy the conditions of  the Cantorized-Enflo criterion (Definition \ref{cEnflo}).

\

CASE 1: $t(n)=t(n-1)=0$.
\begin{align*}   &\beta^n_t(T)- \beta^{n-1}_t(T) =2^{-n} \sum_{i\in I_n} z_{i,0}^* T(z_{i,0})- 2^{-n+1} \sum_{i\in I_{n-1}} z_{i,0}^* T(z_{i,0}) \\
			&=2^{-n} \sum_{i\in I_n} 2^{-1}  \left (  x_{8i}^*-x_{8i+1}^*\right ) T \left ( z_{i,0} \right)  - 2^{-n+1}\sum_{i\in I_{n-1}} 2^{-2} (x_{16i}^*+ x_{16i+1}^* + x_{16i+8}^*+ x_{16i+9}^*) T \left (z_{i,0} \right )
\end{align*}
		  $$   = 2^{-n-1}  \sum_{i\in I_{n-1}}  \{  x_{16i}^* T (z_{2i,0}-z_{i,0}) 
		   + x_{16i+1}^*T \left ( -z_{2i,0}-z_{i,0} \right )$$$$
		   + x_{16i+8}^* T(  z_{2i+1,0}- z_{i,0})
	         + x_{16i+9}^* T(-z_{2i+1,0} -z_{i,0}) \}$$
		     
\

The elements in parentheses above will be called $y_{16i}, y_{16i+1}, y_{16i+8}, y_{16i+9}$ respectively, thus 
$$ \beta^n_t(T)- \beta^{n-1}_t(T) = 2^{-n-1} \sum_{j\in I_{n+3}(0,0)} x_j^*T(y_j),
$$ 
where $I_{n}(0,0)=I_n^0\cup I_n^1\cup I_n^{8} \cup I_n^9$.

\

CASE 2:  $t(n)=0$, $t(n-1)=1$

\begin{align*}   &\beta^n_t(T)- \beta^{n-1}_t(T) =2^{-n} \sum_{i\in I_n} z_{i,0}^* T(z_{i,0})- 2^{-n+1} \sum_{i\in I_{n-1}} z_{i,1}^* T(z_{i,1}) \\
			&=2^{-n} \sum_{i\in I_n} 2^{-1}  \left (  x_{8i+2}^*-x_{8i+3}^*\right ) T \left ( z_{i,0} \right)  - 2^{-n+1}\sum_{i\in I_{n-1}} 2^{-2} (x_{16i+2}^*+ x_{16i+3}^* + x_{16i+10}^*+ x_{16i+11}^*) T \left (z_{i,1} \right )
\end{align*}
$$   = 2^{-n-1}  \sum_{i\in I_{n-1}}  \{  x_{16i+2}^* T (z_{2i,0}-z_{i,1}) 
		   + x_{16i+3}^*T \left ( -z_{2i,0}-z_{i,1} \right )$$$$
		   + x_{16i+10}^* T(  z_{2i+1,0}- z_{i,1})
	         + x_{16i+11}^* T(-z_{2i+1,0} -z_{i,1}) \}$$
		     
\

The elements in parentheses above will be called $y_{16i+2}, y_{16i+3}, y_{16i+10}, y_{16i+11}$ respectively, thus 
$$ \beta^n_t(T)- \beta^{n-1}_t(T) = 2^{-n-1} \sum_{j\in I_{n+3}(0,1)} x_j^*T(y_j),
$$ 
where $I_{n}(0,1)=I_n^2\cup I_n^3\cup I_n^{10} \cup I_n^{11}$.

\

CASE 3:  $t(n)=1$, $t(n)=0$
\begin{align*}   &\beta^n_t(T)- \beta^{n-1}_t(T) =2^{-n} \sum_{i\in I_n} z_{i,1}^* T(z_{i,1})- 2^{-n+1} \sum_{i\in I_{n-1}} z_{i,0}^* T(z_{i,0}) \\
			&=2^{-n} \sum_{i\in I_n} 2^{-1}  \left (  x_{8i+4}^*-x_{8i+5}^*\right ) T \left ( z_{i,1} \right)  - 2^{-n+1}\sum_{i\in I_{n-1}} 2^{-2} (x_{16i+4}^*+ x_{16i+5}^* + x_{16i+12}^*+ x_{16i+13}^*) T \left (z_{i,0} \right )
\end{align*}
		  $$   = 2^{-n-1}  \sum_{i\in I_{n-1}}  \{  x_{16i+4}^* T (z_{2i,1}-z_{i,0}) 
		   + x_{16i+5}^*T \left ( -z_{2i,1}-z_{i,0} \right )$$$$
		   + x_{16i+12}^* T(  z_{2i+1,1}- z_{i,0})
	         + x_{16i+13}^* T(-z_{2i+1,1} -z_{i,0}) \}$$
		     
\

The elements in parentheses above will be called $y_{16i+4}, y_{16i+5}, y_{16i+12}, y_{16i+13}$ respectively, thus 
$$ \beta^n_t(T)- \beta^{n-1}_t(T) = 2^{-n-1} \sum_{j\in I_{n+3}(1,0)} x_j^*T(y_j),
$$ 
where $I_{n}(1,0)=I_n^4\cup I_n^5\cup I_n^{12} \cup I_n^{13}$.

\

CASE 4: $t(n)=1$, $t(n-1)=1$
\begin{align*}   &\beta^n_t(T)- \beta^{n-1}_t(T) =2^{-n} \sum_{i\in I_n} z_{i,1}^* T(z_{i,1})- 2^{-n+1} \sum_{i\in I_{n-1}} z_{i,1}^* T(z_{i,1}) \\
			&=2^{-n} \sum_{i\in I_n} 2^{-1}  \left (  x_{8i+6}^*-x_{8i+7}^*\right ) T \left ( z_{i,1} \right)  - 2^{-n+1}\sum_{i\in I_{n-1}} 2^{-2} (x_{16i+6}^*+ x_{16i+7}^* + x_{16i+14}^*+ x_{16i+15}^*) T \left (z_{i,1} \right )
\end{align*}
		  $$   = 2^{-n-1}  \sum_{i\in I_{n-1}}  \{  x_{16i+6}^* T (z_{2i,1}-z_{i,1}) 
		   + x_{16i+7}^*T \left ( -z_{2i,1}-z_{i,1} \right )$$$$
		   + x_{16i+14}^* T(  z_{2i+1,1}- z_{i,1})
	         + x_{16i+15}^* T(-z_{2i+1,1} -z_{i,1}) \}$$
		     
\

The elements in parentheses above will be called $y_{16i+6}, y_{16i+7}, y_{16i+14}, y_{16i+15}$ respectively, thus 
$$ \beta^n_t(T)- \beta^{n-1}_t(T) = 2^{-n-1} \sum_{j\in I_{n+3}(1,1)} x_j^*T(y_j),
$$ 
where $I_{n}(1,1)=I_n^6\cup I_n^7\cup I_n^{14} \cup I_n^{15}$.

\

Hence, 

$$ \beta^n_t(T)- \beta^{n-1}_t(T)=  2^{-n-1} \sum_{j\in I_{n+3}(t(n), t(n-1)) } x_j^*T(y_j).
$$

We use the functions $\{f_k, \, g_k, \, h_k\}$ to describe the support of the vectors $y_j$. For each $j$, we shall need four functions of the $\{f_k, \, k\leq 8\}$, nine functions of the $\{g_k, \, k\leq 15\}$ and eight functions of the $\{h_k, \, k\leq 32\}$.  In fact, notice that 
$$y_j=  \sum_{k=1}^4 \alpha_{j_k}x_{f_{j_k}(j)}+  \sum_{t=1}^9 \beta_{j_t}x_{g_{j_t}(j)} + \sum_{s=1}^{8}  \gamma_{j_s}x_{ h_{j_s}(j)},$$
where  $|\alpha_{j,k}| =|\gamma_{j_s}|=|\beta_{j_t}|=1$ for all the indexes in the formula above, except for one $j_{t_0}$ which satisfies $|\beta_{j_{t_0}}|=2$.

Given $\epsilon, \delta=0,1$, we write $\nabla_n(\epsilon, \delta)= \{A \in \nabla_n \, : \, A\subseteq I_n(\epsilon,\delta)\}$. Notice that 

\begin{align*}2^{-n-1}\sum_{j\in I_{n+3}(\epsilon,\delta) } x_j^*T(y_j) &= 2^{-n-1} \sum_{A\in \nabla_{n+3}(\epsilon, \delta)} \, \,  \sum_{j\in A} x_j^*T(y_j)\\
&= 2^{-n-1} \sum_{A\in \nabla_{n+3}(\epsilon, \delta)}  2^{-|A|} \sum_{\theta}  \left ( \sum_{j\in A} \theta_j x_j^* \right ) \left (  \sum_{j\in A} \theta_j Ty_j \right ),
\end{align*}
where the sum is taken over all the choices of signs $\{\theta_j\}_{j\in A}$.  Observe that by Lemma \ref{partition}-(2) and the  equation (\ref{normf})  above  (about norm of the functionals $x_j^*$'s) we have, for every $A\in \nabla_{n+3}(\epsilon, \delta)$ and $\{\theta_j\}_{j\in A}$,

\begin{eqnarray*} \left \| \sum_{j\in A} \theta_j x_j^* \right \|_{Y^*}  &\leq& K \left(  \sum_{B\in \Delta_n} \left ( \sum_{j\in B\cap A} |\theta_j|^2 \right )^{q/2}\right )^{1/q}\\
&=& K|A|^{1/q}   \leq K(2m_{n+3})^{1/q}, 
\end{eqnarray*}
where $1/p + 1/q =1$.  By Lemma \ref{partition}-(3) we have for every $A\in \nabla_{n+3}(\epsilon, \delta)$,  $\{\theta_j\}_{j\in A}$ and  any function $\xi$ in $\{f_k, \, g_k, \, h_k\}$

$$  \left \| \sum_{j\in A} \theta_j x_{\xi(j)} \right \|\leq K |A|^{1/2}\leq K(2m_{n+3})^{1/2}.$$
It follows that

$$ \left \| \sum_{j\in A} y_j \right \| = \left \|  \sum_{j\in A} \sum_{k=1}^{21} \lambda_{j_k} x_{\xi_{l_k}(j)}  \right \| \leq 42K|A|^{1/2}\leq 42K(2m_{n+3})^{1/2}.
$$
Notice that by construction $|\nabla_n(\epsilon, \delta)|= 2^{-2}|\nabla_n|\leq 2^{n-2}m_n^{-1}$. Hence, 

\begin{align*} \left |  \beta^n_t(T)- \beta^{n-1}_t(T) \right |&=
 \left | 2^{-n-1}\sum_{j\in I_{n+3}(\epsilon, \delta) } x_j^*T(y)  \right | \\
 &\leq  2 ^{-n-1} (2^{n+1}m_{n+3}^{-1})K(2m_{n+3})^{1/q}42K\sqrt 2m_{n+3}^{1/2} \|T\| \\
&\leq  84K^2(m_{n+3})^{1/q+1/2-1} \|T\|
\end{align*}
Since $\alpha=1/2+1/q-1=1/2-1/p<0$, the series $\sum_n m_n^{\alpha}\leq \sum_n 2^{\alpha(n/32-1)}<\infty$. Therefore,  $X$  satisfies the Cantorized-Enflo criterion. 
\end{proof}

\begin{obs} The proof of the Theorem \ref{p<2} is based on the idea from \cite{S} where subspaces of $\ell_p$ ($1\leq p <2$) without AP were constructed.   It was pointed out by Szankowsi \cite{S} (see also \cite[Remark 2. pg 111]{LT})  that the mentioned idea can be easily adapted to obtain subspaces of $\ell_p$  ($2<p<\infty$)  without AP.  This implies that also  the method  used in the proof of Theorem \ref{p<2} is  valid  for Banach spaces  $X$ in which  $\ell_p$  ($2<p<\infty$) is crudely finitely representable. Indeed, the same definition of vectors $z_{i,\epsilon}$ and functionals $z_{i,\epsilon}^*$ works, it is only necessary to  modify the construction of the partitions $\Delta_n$ and $\nabla_n$ in Lemma \ref{partition}. This gives us an independent proof of Proposition \ref{q>2}.
\end{obs}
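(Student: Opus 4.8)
The plan is to keep the entire construction of Theorem \ref{p<2} intact except for its combinatorial input. Concretely, I would reuse verbatim the vectors $z_{i,\epsilon}$, the functionals $z_{i,\epsilon}^*$, and the purely algebraic identities of Cases 1--4, which establish, with no reference to the value of $p$, that
\[ \beta^n_t(T)-\beta^{n-1}_t(T)=2^{-n-1}\sum_{j\in I_{n+3}(t(n),t(n-1))}x_j^*T(y_j), \]
where each $y_j$ is supported through the functions $\{f_k,g_k,h_k\}$. The norm estimates (\ref{normv}) and (\ref{normf}) continue to hold, now with $p>2$ and therefore $q<2$. Thus the only step that fails is the final size estimate: with the partition of Lemma \ref{partition} the functional factor is forced to obey $\|\sum_{j\in A}\theta_jx_j^*\|_{Y^*}\le K|A|^{1/q}$, and since $q<2$ this yields the exponent $1/2-1/p>0$, so $\sum_n m_n^{1/2-1/p}$ diverges.

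To repair this I would replace Lemma \ref{partition} by a \emph{transposed} version, producing partitions $\nabla_n,\Delta_n$ of $I_n$ and integers $m_n\ge 2^{n/32-1}$ such that each $A\in\nabla_n$ satisfies $m_n\le|A|\le 2m_n$ and, in place of the original incidence conditions (2)--(3): (i) each $A\in\nabla_n$ is contained in a single element of $\Delta_n$; and (ii) for every $\xi\in\{f_k,g_k,h_k\}$ the image $\xi(A)$ meets each element of $\Delta_{n-1}\cup\Delta_n\cup\Delta_{n+1}$ in at most one point. In terms of the representation $I_n^0=C_n\times D_n$ with $D_n=\prod_l D_n^l$, this amounts to interchanging the row-- and column--slices that define $\nabla_n$ and $\Delta_n$, so that the small $\nabla$--blocks now lie inside single $\Delta$--blocks while the support functions carry them transversally to the $\Delta$--blocks.

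With (i) the functional factor is now controlled by concentration, $\|\sum_{j\in A}\theta_jx_j^*\|_{Y^*}\le K|A|^{1/2}$ from (\ref{normf}); with (ii) the vector factor is controlled by dispersion, $\|\sum_{j\in A}\theta_jx_{\xi(j)}\|\le K|A|^{1/p}$ from (\ref{normv}), whence $\|\sum_{j\in A}\theta_jy_j\|\le 42K|A|^{1/p}$. Running the same averaging over signs as in Theorem \ref{p<2}, the bound becomes
\[ \left|\beta^n_t(T)-\beta^{n-1}_t(T)\right|\le 84K^2\, m_{n+3}^{\,1/2+1/p-1}\|T\|, \]
and since $1/2+1/p-1=1/p-1/2<0$ for $p>2$ we obtain $\sum_n m_n^{1/p-1/2}\le\sum_n 2^{(1/p-1/2)(n/32-1)}<\infty$. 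This verifies conditions (\ref{A}) and (\ref{B}) of the Cantorized--Enflo criterion and yields an independent proof of Proposition \ref{q>2}.

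I expect the only real difficulty to be the construction of the transposed partition, and specifically the verification of (ii) for the collapsing functions $f_k$ and $h_k$: these are constant in the coordinate $l$, so to make $\xi(A)$ transversal to $\Delta$ one must choose each $\nabla$--block $A$ so that $f_k$ and $h_k$ restrict to injections on $A$ sending distinct points into distinct $\Delta$--blocks---the exact opposite of what the original Lemma \ref{partition} arranges, where the collapsing is used to \emph{concentrate} $\xi(A)$. Guaranteeing this transversality simultaneously with the size bounds $m_n\le|A|\le 2m_n$ and $m_n\ge 2^{n/32-1}$, and for all three families $\{f_k,g_k,h_k\}$ at once, is where the combinatorial work concentrates; this is precisely the content of Szankowski's dual construction \cite{S} and of \cite[Remark 2. pg 111]{LT}.
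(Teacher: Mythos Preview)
Your proposal is correct and follows exactly the route the Remark points to: keep the vectors $z_{i,\epsilon}$, functionals $z_{i,\epsilon}^*$ and the algebra of Cases 1--4 unchanged, and swap the roles of the two incidence conditions in Lemma~\ref{partition} so that the $\ell_2$-estimate now falls on the functional side and the $\ell_p$-estimate on the vector side, turning the exponent $1/2-1/p$ into $1/p-1/2$. The paper does not spell out a proof beyond citing \cite{S} and \cite[Remark 2, p.~111]{LT}; your identification of the transposed partition (each $A\in\nabla_n$ contained in a single $\Delta_n$-block, while each $\xi(A)$ is transversal to the $\Delta$-blocks) and of the genuine combinatorial issue---arranging transversality for the collapsing maps $f_k,h_k$---is precisely the content of those references.
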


We observe that the construction of Theorem \ref{p<2} satisfies the Cantorized-Enflo criterion in the form of Remark \ref{wAP}, therefore  every $X_t$  constructed above fails the AP. We can conclude that: 
\begin{teo} \label{main}Every separable Banach space non near Hilbert satisfies the Cantorized-Enflo criterion and therefore is ergodic. Furthermore,  the reduction uses subspaces without  AP.
\end{teo}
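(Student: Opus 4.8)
The plan is to deduce Theorem~\ref{main} from the two explicit constructions already carried out, namely Theorem~\ref{p<2} and Proposition~\ref{q>2}, together with the Maurey--Pisier theorem and a short dichotomy. First I would record the classical fact that every infinite dimensional Banach space satisfies $p(X)\le 2\le q(X)$ --- no infinite dimensional space has type strictly greater than $2$ or cotype strictly smaller than $2$ --- so that ``$X$ is not near Hilbert'' is \emph{exactly} the disjunction $p(X)<2$ or $q(X)>2$. It therefore suffices to check, in each of these two cases, that the hypothesis ``$\ell_p$ is crudely finitely representable in $X$'' of one of the two theorems is met; Theorem~\ref{criterio} then gives ergodicity at once.

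Suppose first $p(X)<2$. By the Maurey--Pisier theorem~\cite{MP}, $\ell_{p(X)}$ is finitely representable in $X$, hence in particular crudely finitely representable, so taking $p=p(X)\in[1,2)$ puts us precisely in the hypothesis of Theorem~\ref{p<2}. Suppose next $q(X)>2$. By Maurey--Pisier, $\ell_{q(X)}$ is finitely representable in $X$, with the usual reading that this means $c_0$ when $q(X)=\infty$. If $q(X)<\infty$ take $p=q(X)\in(2,\infty)$; if $q(X)=\infty$, then since every finite dimensional Banach space embeds $(1+\epsilon)$-isomorphically in some $\ell_\infty^N$, $\ell_3$ (say) is finitely representable in $X$ and one may take $p=3$. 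In either sub-case some $\ell_p$ with $p>2$ is crudely finitely representable in $X$ and Proposition~\ref{q>2} applies. Thus in all cases $X$ satisfies the Cantorized-Enflo criterion, and by Theorem~\ref{criterio} the separable space $X$ is ergodic.

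For the ``Furthermore'' clause I would invoke the observations recorded immediately after the proofs of Theorem~\ref{p<2} and of Proposition~\ref{q>2}: in each of those constructions the data $\{z_{i,\epsilon}\}$, $\{z_{i,\epsilon}^*\}$ witness the Cantorized-Enflo criterion in the stronger form of Remark~\ref{wAP}, where $|\beta^n_t(T)-\beta^{n-1}_t(T)|$ is dominated by $\sup\{\|Tz\|:z\in F_n\}$ for finite sets $F_n$ with $\sum_n\sup\{\|z\|:z\in F_n\}<\infty$ (one reads this off from the grouping over the sets $A\in\nabla_{n+3}$ in those proofs, taking $F_n$ to consist of the suitably normalised signed sums $\sum_{j\in A}\theta_j y_j$). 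By Remark~\ref{wAP} every $X_t$ then fails the AP, so the Borel reduction $\phi(t)=X_t$ of $(2^\N,E_0)$ into $(\mathcal{SB}(X),\simeq)$ furnished by Theorem~\ref{criterio} uses only subspaces without the approximation property, as claimed.

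I do not expect a serious obstacle: the real work lies in the combinatorics of Lemma~\ref{partition}, the trace estimates of Section~3, and the Davie and Johnson--Szankowski estimates behind Proposition~\ref{q>2}, all of which are already done. The only mild points of care are the bookkeeping in the dichotomy --- in particular the endpoint $q(X)=\infty$, where one must pass from ``$c_0$ finitely representable in $X$'' to ``$\ell_p$ finitely representable in $X$ for some finite $p>2$'' --- and checking that the two constructions deliver the criterion in the precise weak-AP form of Remark~\ref{wAP} rather than only in the form of Definition~\ref{cEnflo}.
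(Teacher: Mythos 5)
Your proposal is correct and follows essentially the same route as the paper: the theorem is obtained exactly by combining the Maurey--Pisier theorem with the dichotomy $p(X)<2$ or $q(X)>2$, feeding these into Theorem~\ref{p<2} and Proposition~\ref{q>2} respectively, and concluding via Theorem~\ref{criterio} and Remark~\ref{wAP}. Your explicit treatment of the endpoint $q(X)=\infty$ (passing from $c_0$ to some $\ell_p$, $2<p<\infty$, via the universality of the $\ell_\infty^N$'s) is a detail the paper leaves implicit, but it is standard and correctly handled.
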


The following remark is due to R. Anisca. 
\begin{obs}  There do  exist near-Hilbert spaces satisfying the Cantorized-Enflo criterion.  Indeed, Casazza, Garc\'ia and Johnson  \cite{CGJ} constructed an asymptotically Hilbertian space which fails the AP. Their approach follows closely the Davie construction \cite{D} of a subspace of $\ell_p=(\sum_n \ell_p^{3.2^n})_p$ ($2<p<\infty$) failling AP.   The space in \cite{CGJ} is instead  a subspace of $Z=(\sum_n \ell_{p_n}^{3.2^n})_2$  where $p_n \downarrow 2$ appropriately.  One can  combine  the arguments of   Proposition \ref{q>2} and  those in  \cite[Section IV]{JS}  to  construct a version  of the Casazza, Garc\'ia and Johnson space satisfying the Cantorized-Enflo criterion.
Also, the arguments from Theorem \ref{main} can be used in the context of construction by Anisca and Chlebovec \cite{AC} and obtain that spaces of the form $\ell_2(X)$, with $X$ of cotype 2 and having the sequence of Euclidean distances of order at least $(\log n)^{\beta}$ ($\beta>1$), satisfy the Cantorized-Enflo criterion. 

\end{obs}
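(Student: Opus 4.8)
The plan is to exhibit the two families named in the statement and, for each, to verify both that the space is near Hilbert and that the vectors produced by the relevant $AP$-failure construction already satisfy Definition \ref{cEnflo}. The guiding principle is that the Cantorized-Enflo criterion is nothing more than the ``doubled'' (that is, $2^\N$-indexed) version of the Enflo--Davie--Szankowski trace estimate used to kill the $AP$: any concrete construction of an $AP$-failing subspace that proceeds by bounding a single trace difference of the form $\sum_j \alpha^k_j e^k_j(\cdot)$ can be split into two parallel copies indexed by $\epsilon=0,1$ exactly as in Proposition \ref{q>2} and in \cite[Section IV]{JS}, producing biorthogonal families $(z_{j,\epsilon})$, $(z_{j,\epsilon}^*)$ together with a summable sequence $(\alpha_n)$. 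Hence the work is to check that the trace coefficients furnished by \cite{CGJ} and \cite{AC} still sum after this doubling.

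First I would treat the Casazza--Garc\'ia--Johnson space. Recall that it is built, following Davie, inside $Z=(\sum_n \ell_{p_n}^{3\cdot 2^n})_2$ with $p_n\downarrow 2$, and that the $\ell_2$-sum endows $Z$ with a natural Schauder decomposition into the blocks $\ell_{p_n}^{3\cdot 2^n}$. Since the space is asymptotically Hilbertian it is in particular near Hilbert, so only the criterion is at issue. I would run the Davie vectors $e^k_j$ and functionals $\alpha^k_j$ on this decomposition precisely as in the proof of Proposition \ref{q>2}, the sole change being that block $n$ uses the exponent $p_n$ in place of a fixed $p$. The resulting trace difference obeys $|\beta^n_t(T)-\beta^{n-1}_t(T)|\le \alpha_n\|T\|$ with $\alpha_n \asymp (n+1)^{1/2}2^{-n(p_n-2)/2p_n}$, and the point is that this is, up to a constant, the very quantity whose summability Casazza, Garc\'ia and Johnson arranged in order that their space fail the $AP$; thus $\sum_n\alpha_n<\infty$ is inherited from \cite{CGJ}, and conditions (1)--(\ref{B}) of Definition \ref{cEnflo} hold after the routine verification of biorthogonality.

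Next I would treat the Anisca--Chlebovec spaces $\ell_2(X)$, with $X$ of cotype $2$ and Euclidean-distance sequence of order at least $(\log n)^\beta$, $\beta>1$. Here the subtlety is that, being near Hilbert, such a space has no $\ell_p$ ($p\neq 2$) even crudely finitely representable, so neither Proposition \ref{q>2} nor Theorem \ref{p<2} applies off the shelf; instead I would apply the Szankowski combinatorial machinery of Lemma \ref{partition} and the Cantorization scheme of Theorem \ref{p<2} directly to the concrete vectors of \cite{AC}, whose two-sided estimates carry a factor governed by the Euclidean distance at the block scale $2^n$. Since that distance is $\gtrsim (n\log 2)^\beta$, the resulting trace bound is $\alpha_n \lesssim n^{-\beta}$, and $\sum_n n^{-\beta}<\infty$ precisely when $\beta>1$; this is where the hypothesis on $\beta$ enters. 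For near-Hilbertness, cotype $2$ of $X$ gives cotype $2$ of $\ell_2(X)$, hence $q(\ell_2(X))=2$, while the sub-polynomial (logarithmic) control of the Euclidean distances forces type $2-\epsilon$ for every $\epsilon>0$, so that $p(\ell_2(X))=2$ as well.

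The main obstacle, in both cases, is the tension inside a single near-Hilbert space between two competing demands: the exponents $p_n$ (respectively the Euclidean distances) must approach $2$ (respectively grow only sub-polynomially) slowly enough to keep the space near Hilbert, yet the associated trace coefficients $\alpha_n$ must still form a convergent series. I expect the bulk of the work to be the bookkeeping that the rate chosen in \cite{CGJ} and the threshold $\beta>1$ in \cite{AC} sit on the correct side of this balance. Since both references already calibrate their parameters to make the underlying $AP$-failure work, the summability $\sum_n\alpha_n<\infty$ needed for the Cantorized-Enflo criterion comes essentially for free, and the only genuinely new ingredient is the doubling of the construction into the families $(z_{j,\epsilon}),(z_{j,\epsilon}^*)$ already performed in Proposition \ref{q>2} and Theorem \ref{p<2}, which also yields condition (\ref{A}).
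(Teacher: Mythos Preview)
Your proposal is correct and matches the paper's approach: the remark in the paper carries no separate proof beyond the sketch contained in its own statement, and you have faithfully expanded that sketch---doubling the Davie/Johnson--Szankowski vectors as in Proposition~\ref{q>2} for the Casazza--Garc\'ia--Johnson space, and transplanting the Szankowski-type combinatorics of Lemma~\ref{partition} and Theorem~\ref{p<2} into the Anisca--Chlebovec setting, with summability of $(\alpha_n)$ inherited in each case from the parameter choices already made in \cite{CGJ} and \cite{AC}. One small caution: the paper only asserts that the $\ell_2(X)$ spaces satisfy the Cantorized-Enflo criterion, not that they are near Hilbert, and your argument that a lower bound $(\log n)^\beta$ on Euclidean distances forces $p(\ell_2(X))=2$ is not quite right (a lower bound on distances pushes away from Hilbert, not toward type~$2$); drop that clause and your write-up is clean.
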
 

As a  direct consequence of  Lemma \ref{uncoun} and Theorem \ref{p<2}, 
we can now extend  the result of Johnson and Szankowski \cite{JS} about complementably universal spaces for the family of subspaces of $\ell_p$ ($2<p<\infty$).
\begin{teo}  There is no   separable Banach space which is complementably universal  for the class of all subspaces of $X$ when $X$ is non near Hilbert. 
\end{teo}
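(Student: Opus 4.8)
The plan is to obtain the statement as an immediate consequence of Lemma~\ref{uncoun} together with Proposition~\ref{q>2} and Theorem~\ref{p<2}, so essentially no new work is required. First I would unwind the hypothesis: a Banach space $X$ fails to be near Hilbert precisely when $p(X)<2$ or $q(X)>2$, since $p(X)\leq 2\leq q(X)$ for every infinite-dimensional Banach space. By the Maurey--Pisier theorem \cite{MP}, both $\ell_{p(X)}$ and $\ell_{q(X)}$ are finitely representable in $X$; hence, choosing $r\in\{p(X),q(X)\}$ with $r\neq 2$, the space $\ell_r$ is finitely, in particular crudely finitely, representable in $X$.

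Next I would feed this into the criterion already established. If $r<2$, Theorem~\ref{p<2} shows that $X$ satisfies the Cantorized-Enflo criterion; if $r>2$, Proposition~\ref{q>2} does the same. Either way we obtain bounded sequences $(z_{n,\epsilon})_{n\in\N}$, $(z_{n,\epsilon}^*)_{n\in\N}$ and scalars $(\alpha_n)_n$ as in Definition~\ref{cEnflo}, together with the associated family $\{X_t\}_{t\in 2^\N}$ of closed infinite-dimensional subspaces. By construction each $X_t$ lies inside a subspace $Y\subseteq X$ carrying a suitable Schauder decomposition (Remark~\ref{finiterep}), so in particular each $X_t$ is a subspace of $X$.

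Finally I would run the counting argument. Suppose, towards a contradiction, that $W$ is a \emph{separable} Banach space which is complementably universal for the class of all subspaces of $X$. Then $W$ is in particular complementably universal for the subfamily $\{X_t\}_{t\in 2^\N}$, so Lemma~\ref{uncoun} applied with $\Gamma=2^\N$ forces the density character of $W$ to be at least $|2^\N|=\mathfrak c>\aleph_0$, contradicting the separability of $W$. Hence no such $W$ exists, and this proves the theorem.

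The point I would stress is that all the genuine difficulty has already been absorbed into the earlier results: the Cantorized Davie and Szankowski constructions underlying Proposition~\ref{q>2} and Theorem~\ref{p<2}, and the biorthogonality-plus-trace estimate in the proof of Lemma~\ref{uncoun}. At this final stage the only non-formal steps are the reduction of ``non near Hilbert'' to the crude finite representability of some $\ell_r$ with $r\neq 2$, which is exactly Maurey--Pisier, and the observation that the index set $\Gamma$ may be taken to be all of $2^\N$, which is legitimate since Lemma~\ref{uncoun} is stated for an arbitrary $\Gamma\subseteq 2^\N$. I therefore do not expect a real obstacle here.
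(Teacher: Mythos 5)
Your proposal is correct and follows exactly the paper's route: the paper derives this theorem as a direct consequence of Lemma \ref{uncoun} (with $\Gamma=2^{\N}$) combined with Proposition \ref{q>2} and Theorem \ref{p<2}, using Maurey--Pisier to get $\ell_{p(X)}$ or $\ell_{q(X)}$ finitely representable in $X$ with the relevant index different from $2$. No gaps; this is the intended argument.
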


\begin{cor} There is no  separable Banach space which is complementably universal for the family of subspaces of $\ell_p$ ($1\leq p <2$).
\end{cor}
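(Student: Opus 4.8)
The plan is to read this off as an immediate specialization of the preceding theorem, i.e.\ of Lemma~\ref{uncoun} combined with Theorem~\ref{p<2}. The only thing that needs to be checked is that $\ell_p$ for $1\leq p<2$ falls under the hypotheses, and this is essentially free: $\ell_p$ is isometrically, hence $1$-crudely, finitely representable in itself, so Theorem~\ref{p<2} applies with $X=\ell_p$. (Equivalently, one observes that $\ell_p$ has type exactly $p$, so $p(\ell_p)=p<2$ and $\ell_p$ is not near Hilbert, which is exactly the hypothesis of the theorem stated just before the corollary.)

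Concretely, I would proceed as follows. First, invoke Theorem~\ref{p<2} to conclude that $\ell_p$ satisfies the Cantorized-Enflo criterion: there are bounded biorthogonal sequences $(z_{n,\epsilon})_{n\in\N}$, $(z_{n,\epsilon}^*)_{n\in\N}$ ($\epsilon=0,1$) and positive scalars $(\alpha_n)_n$ as in Definition~\ref{cEnflo}, all living in (the dual of) $\ell_p$. For every $t\in 2^{\N}$ the associated space
\[
X_t=\overline{\spa}\left\{ z_{j,t(n)} \,:\, j\in I_n,\ n=1,2,\ldots \right\}
\]
is then a closed infinite-dimensional subspace of $\ell_p$, and $\{X_t\}_{t\in 2^{\N}}$ is a family of subspaces of $\ell_p$ indexed by the continuum.

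Second, suppose toward a contradiction that some separable Banach space $W$ were complementably universal for the class of all subspaces of $\ell_p$. Then in particular $W$ would be complementably universal for the subfamily $\{X_t\}_{t\in 2^{\N}}$. Applying Lemma~\ref{uncoun} with $\Gamma=2^{\N}$, the density character of $W$ would be at least $\lvert 2^{\N}\rvert$, contradicting the separability of $W$. Hence no such $W$ exists.

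I do not expect any genuine obstacle here: the corollary is a direct instantiation, and the subspaces $X_t\subseteq\ell_p$ produced by Theorem~\ref{p<2} already serve as the witnesses required by Lemma~\ref{uncoun}. The only point deserving an explicit line is the (trivial) remark that $\ell_p$ is crudely finitely representable in itself, so that Theorem~\ref{p<2} may be applied with $X=\ell_p$ in the full range $1\leq p<2$, thereby also recovering the classical Johnson--Szankowski result for $2<p<\infty$ by the symmetric construction indicated in the remark following Theorem~\ref{p<2}.
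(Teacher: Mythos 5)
Your proposal is correct and follows exactly the paper's route: the corollary is obtained as a direct consequence of Lemma~\ref{uncoun} and Theorem~\ref{p<2}, applying the latter with $X=\ell_p$ (where the subspaces $X_t$ live inside $\ell_p$ itself) and the former with $\Gamma=2^{\N}$ to rule out a separable complementably universal space. No gaps.
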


In the limit case, Johnson and Szankowski \cite{JS2} constructed a separable space, non isomorphic to the Hilbert, such that all subspaces have the BAP, and which is
complementably  universal for all its subspaces. Also, if  every subspace of $X$ has BAP (for example if $X$ is weak Hilbert), then the  Pe\l czy\'nski's universal  space (see \cite[Theorem 2.d.10(a)]{LT1})   is complementably universal for the family of  all subspaces of $X$. 

\section{The Schlumprecht type space $S_{p,r}$ is  not near Hilbert}

Schlumprecht  \cite{AS,Sc}  constructed the  first example  of a complementably minimal  Banach space $S$ different from the classical spaces $c_0$ and $\ell_p$ ($1\leq p < \infty$).  In \cite{CKKM},   the Schlumprecht construction was extended to uniformly convex
examples using interpolation techniques. In fact, they constructed a
family of uniformly convex complementably minimal spaces by   interpolating $S$ and $\ell_q$.

The approach in \cite{CKKM} deals with Banach spaces $X$ defined  by lattice norms $\|.\|_X$ on $c_{00}$.  In this context, if $X$ and $Y$ are two such spaces and $0<\theta<1$,  then $X^{1-\theta}Y^{\theta}$ is defined as  the space $Z$ with the norm $\|z\|_Z= \inf\{ \| x\|_X^{1-\theta} \| y\|_Y^{\theta}, \, z=|x|^{1-\theta}|y|^{\theta}\}$.   When we consider the complex scalars and either $X$ or $Y$ is separable, then $Z$ coincides with the usual complex interpolation  space $[X,Y]_{\theta}$ (see \cite{C}).
\begin{defi} For every $1\leq p < r \leq \infty$, the  Schlumprecht type space $S_{p,r}$ is defined as the interpolated space $\ell_t^{1-\theta} S^{\theta}$, where $\theta= \frac{1}{p}-\frac{1}{r}$ and $t=(1-\theta)r$.
\end{defi}

\begin{pro}{\cite[Proposition 3 and Theorem 8]{CKKM}} \label{ScFamily}  For any $1\leq p < r \leq \infty$, the space $S_{p,r}$ and its dual  are complementably minimal.  Furthermore, $S_{p,r}$  has an 1-unconditional normalized basis $(e_n)_{n=1}^{\infty}$ such that $\| \sum_{i=1}^{n} e_i \|_{S_{p,r}} =n^{1/p}\log_2(n+1)^{1/r-1/p}$, for every $n\in \N$.
\end{pro}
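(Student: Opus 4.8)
The proposition records two facts about $S_{p,r}$ of very different depth, and my plan is to handle them separately: the identification of its basis and of its fundamental function, which is elementary once the interpolation formalism of \cite{C,CKKM} is in place, and the complementable minimality of $S_{p,r}$ and of $S_{p,r}^{*}$, which is the substantial point and which I would reduce to the corresponding theorem for the Schlumprecht space $S$.

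\emph{Basis and fundamental function.} By definition $S_{p,r}=\ell_t^{1-\theta}S^{\theta}$ is a Banach lattice on $c_{00}$ whose norm $\|z\|=\inf\{\|x\|_{\ell_t}^{1-\theta}\|y\|_S^{\theta}:z=|x|^{1-\theta}|y|^{\theta}\}$ depends only on $|z|$; the common unit vectors $(e_n)$ therefore form a $1$-unconditional basis, normalized because $\|e_n\|_{\ell_t}=\|e_n\|_S=1$. For the fundamental function I would take, in the infimum defining $\|\sum_{i=1}^n e_i\|$, the legitimate decomposition $x=y=\sum_{i=1}^n e_i$; this gives the upper bound $\big(\|\sum_{i=1}^n e_i\|_{\ell_t}\big)^{1-\theta}\big(\|\sum_{i=1}^n e_i\|_S\big)^{\theta}=n^{(1-\theta)/t}\,\big(n/\log_2(n+1)\big)^{\theta}$, and substituting $t=(1-\theta)r$ and $\theta=\tfrac1p-\tfrac1r$ collapses the exponent of $n$ to $\tfrac1r+\theta=\tfrac1p$ and that of $\log_2(n+1)$ to $-\theta=\tfrac1r-\tfrac1p$. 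The matching lower bound amounts to showing that on this ``flat'' vector the infimum is attained at the diagonal $x=y$: writing an admissible decomposition as $x_i=\lambda_i^{\theta}$, $y_i=\lambda_i^{-(1-\theta)}$, the quantity to be minimised is, by the Calder\'on--Lozanovskii inequality $\|a^{1/2}b^{1/2}\|\le\|a\|^{1/2}\|b\|^{1/2}$ applied to each factor, a convex function of $(\log\lambda_i)$ whose only critical point is $\lambda_i\equiv 1$; alternatively one invokes the Lozanovskii duality $(S_{p,r})^{*}\cong\ell_{t'}^{1-\theta}(S^{*})^{\theta}$ and pairs $\sum_{i=1}^n e_i$ with $\sum_{i=1}^n e_i^{*}$.

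\emph{Complementable minimality.} Here I would transport Schlumprecht's theorem — that $S$, hence $S^{*}$, is complementably minimal — through the interpolation functor, which is the content of Theorem 8 of \cite{CKKM}. The scheme: (i) a gliding-hump argument, using only that $(e_n)$ is a basis, shows that every infinite-dimensional closed subspace of $S_{p,r}$ contains, up to an arbitrarily small perturbation, the closed span of a normalized block basic sequence of $(e_n)$; (ii) inside any such block subspace one extracts, by a stabilisation argument, a further normalized block sequence $(v_k)$ whose norm in $S_{p,r}=\ell_t^{1-\theta}S^{\theta}$ is controlled coordinatewise by its norms in $\ell_t$ and in $S$ at once, so that $(v_k)$ is equivalent to $(e_n)$ — whence $[v_k]\cong S_{p,r}$ — and so that a support-type projection onto $[v_k]$ is bounded (by $1$-unconditionality of $(e_n)$ together with the norm estimates for $(v_k)$); (iii) the case of $S_{p,r}^{*}$ follows by dualizing, using $(S_{p,r})^{*}\cong\ell_{t'}^{1-\theta}(S^{*})^{\theta}$ and the complementable minimality of $S^{*}$.

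\emph{Main obstacle.} The basis facts and the convexity computation are routine; the crux is step (ii) — producing, inside an arbitrary block subspace of the interpolated space, a block sequence that is \emph{simultaneously} well-behaved for the $\ell_t$-norm and for the Schlumprecht norm and that spans a \emph{complemented} copy of the whole space, with bounds independent of the block subspace. Schlumprecht's original complementation estimate for $S$ is already delicate — it uses the precise logarithmic growth of the norms and the ``admissibility'' structure built into the implicit norm — and verifying that it survives interpolation with $\ell_t$, uniformly, is where the proof of \cite{CKKM} concentrates. I would therefore isolate this interpolation-stable blocking lemma as the essential ingredient and treat everything else as bookkeeping around it.
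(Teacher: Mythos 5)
This proposition is quoted verbatim from \cite[Proposition 3 and Theorem 8]{CKKM}; the paper gives no proof of it, so there is no internal argument to compare yours against. Judged on its own terms, the first half of your proposal is correct and complete. The upper bound for the fundamental function by taking $x=y=\sum_{i=1}^n e_i$ in the Calder\'on product, together with the exponent bookkeeping $(1-\theta)/t=1/r$ and $1/r+\theta=1/p$, is exactly the right computation, and your duality route for the lower bound closes it: one only needs $\|\sum_{i=1}^n e_i^*\|_{S^*}\leq \log_2(n+1)$, which follows from the term $\frac{1}{\log_2(n+1)}\sum_{i=1}^n\|E_ix\|$ in the implicit Schlumprecht norm with $E_i=\{i\}$, and the Lozanovskii description of the dual of a Calder\'on product. (Your alternative convexity argument is shakier as stated, since $S$ is subsymmetric but the critical-point claim needs more care; I would drop it in favour of the duality argument, which gives the exact constant.)

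The second half is an outline, not a proof, and you say so yourself. The genuine gap is your step (ii): producing, inside an arbitrary block subspace of $S_{p,r}$, a normalized block sequence equivalent to $(e_n)$ whose span is complemented by a bounded projection, with constants independent of the subspace. For $S$ itself this is already the main theorem of Androulakis--Schlumprecht \cite{AS}, and verifying that the relevant averaging and complementation estimates survive the interpolation with $\ell_t$ is the entire content of \cite[Theorem 8]{CKKM}; asserting it as a ``blocking lemma'' does not discharge it. Since the present paper also leaves this to the citation, your proposal is an acceptable reading of the statement, but it should be presented as a reduction to \cite{AS,CKKM} rather than as a proof.
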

Notice that $S$ is simply $S_{1,\infty}$.    We use the ideas from \cite{CFM} and estimates of the norm of  some combination of the vector basis to compute $p(S_{p,r})$ and $q(S_{p,r})$.  

\begin{pro} \label{ScCotype} Let $1\leq p < r \leq \infty$. For every $n\in \N$ and $\epsilon>0$, there exists a sequence of vectors $v_1, \ldots, v_n$ in $c_{00}$ such that 
\begin{enumerate}
\item The set of vectors $\{v_1, \ldots, v_n\}$ are  disjointly supported.
\item  $\|\epsilon_1 v_1 + \cdots + \epsilon_n v_n\|_{S_{p,r}} \leq (1+\epsilon)^{\theta} n^{1/r}$,  for any $\epsilon_1, \ldots, \epsilon_n$ of modulus 1.  
\end{enumerate}

\end{pro}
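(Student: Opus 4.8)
The plan is to use the interpolation description $S_{p,r}=\ell_t^{1-\theta}S^{\theta}$ together with the Calder\'on–product formula
\[
\|v\|_{S_{p,r}}=\inf\Big\{\,\|x\|_{\ell_t}^{1-\theta}\,\|y\|_S^{\theta}\ :\ |v|=|x|^{1-\theta}|y|^{\theta}\,\Big\},
\]
the explicit estimates $\big\|\sum_{k=1}^{m}e_k\big\|_{S_{p,r}}=m^{1/p}(\log_2(m+1))^{1/r-1/p}$ of Proposition \ref{ScFamily} and $\big\|\sum_{k=1}^{m}e_k\big\|_{S}=m/\log_2(m+1)$, and the arithmetic identities $\tfrac{1-\theta}{t}=\tfrac1r$ and $\tfrac1r+\theta=\tfrac1p$. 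The $v_i$ will in fact be normalized in $S_{p,r}$ (this is what makes the statement non-vacuous and what is needed for the subsequent computation of $q(S_{p,r})$).

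First I would isolate the two facts that, combined, yield the estimate. Suppose we can produce, for the given $n$ and $\varepsilon$, disjointly supported vectors $v_i=|x_i|^{1-\theta}|y_i|^{\theta}$ with $\|v_i\|_{S_{p,r}}=1$, where the $x_i$ are disjointly supported and normalized in $\ell_t$ (so $\|\sum_i x_i\|_{\ell_t}=n^{1/t}$ and the $v_i$ inherit disjoint supports from them), and such that the amalgamated vector $Y$ of $S$ — the one agreeing with $y_i$ on $\operatorname{supp}(v_i)$ for each $i$ and vanishing elsewhere — satisfies $\|Y\|_S\le 1+\varepsilon$. Writing also $X=\sum_i x_i|_{\operatorname{supp}(v_i)}$, the Calder\'on formula applied to the decomposition $\big|\sum_i v_i\big|=|X|^{1-\theta}|Y|^{\theta}$ gives
\[
\Big\|\sum_i v_i\Big\|_{S_{p,r}}\le \|X\|_{\ell_t}^{1-\theta}\|Y\|_S^{\theta}\le \big(n^{1/t}\big)^{1-\theta}(1+\varepsilon)^{\theta}=(1+\varepsilon)^{\theta}\,n^{1/r}.
\]
Moreover, since the $v_i$ are disjointly supported and $S_{p,r}$ has a $1$-unconditional basis, $\big|\sum_i\varepsilon_i v_i\big|=\big|\sum_i v_i\big|$ coordinatewise for every choice of modulus-$1$ scalars $\varepsilon_i$; hence the same bound holds for all such $\varepsilon_i$, which is assertion (2).

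It remains to carry out the construction, and this is the main obstacle. One first fixes an integer $s=s(n,\varepsilon)$ so large that all logarithmic ratios of the shape $\log_2(s+1)/\log_2(c\,s+1)$ with $c$ bounded by a fixed power of $n$ are $\ge(1+\varepsilon)^{-1}$; since $\log_2$ grows so slowly this forces $s$ to be enormous (of size roughly $n^{\,n/\varepsilon}$), but that is harmless. Working at a scale $N$ comparable to a fixed power of $n$ times $s$, one wants the amalgamated $Y$ to be (a bounded multiple of) a flat vector over an index set of size $\asymp N$, so that $\|Y\|_S=|G|/\log_2(|G|+1)$-type estimates, together with the choice of $s$, keep $\|Y\|_S\le 1+\varepsilon$. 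The delicate point is that the \emph{naive} choice — taking the $x_i$ and $y_i$ to be flat blocks of a common size — does satisfy $\|Y\|_S\le 1+\varepsilon$ but only produces $\big\|\sum_i v_i\big\|_{S_{p,r}}\asymp n^{1/p}$, which is useless because $n^{1/p}>n^{1/r}$. To recover the exponent $1/r$ rather than $1/p$ while keeping the $v_i$ pairwise disjoint and of norm $1$, the $v_i$ must be given internal, non-flat structure and their supports must be interleaved so as to exploit the cancellation hard-wired into the recursive "weighted $\ell$-interval" definition of the Schlumprecht norm — essentially the same combinatorial device (rapidly growing block lengths plus a carefully controlled interleaving of supports) used in the proof of Theorem \ref{p<2} and in \cite{CFM}. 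Making the three requirements — disjointness, $\|v_i\|_{S_{p,r}}=1$, and $\|Y\|_S\le 1+\varepsilon$ — close simultaneously is where the combinatorial work lies, and it is precisely the slow growth of $\log_2$ that allows the error to be only $(1+\varepsilon)^{\theta}$.
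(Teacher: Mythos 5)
Your framework is right as far as it goes: the Calder\'on--Lozanovskii product formula for $\ell_t^{1-\theta}S^{\theta}$, the identity $(1-\theta)/t=1/r$, and the reduction of assertion (2) to the unsigned sum via $1$-unconditionality all match what is actually needed. But the proof stops exactly where the content of the proposition lies. You explicitly defer "the construction" of the $v_i$, and what you offer in its place --- a gesture toward "internal, non-flat structure", interleaved supports, and the combinatorics of Theorem \ref{p<2} and \cite{CFM} --- is not an argument; no vectors are produced and no estimate $\|Y\|_S\le 1+\varepsilon$ is verified. Since the proposition \emph{is} the existence statement, this is a genuine gap, not a routine detail.

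Moreover, your diagnosis of what is needed is off. The missing ingredient is a known fact about Schlumprecht's space, due to Kutzarova and Lin \cite{KL}: there exist disjointly supported constant-coefficient vectors $v_j=\frac{\log_2(m_j+1)}{m_j}\sum_{i\in M_j}e_i$ (normalized in $S$, with rapidly increasing lengths $m_j$) such that $\|v_1+\cdots+v_n\|_S\le 1+\varepsilon$. Your claim that flat blocks "only produce $n^{1/p}$" is correct for blocks of a \emph{common} size, but rapidly increasing lengths already give the almost-$c_0$ upper estimate in $S$; no interleaving or cancellation device is required. Granting this, the paper finishes with the \emph{trivial} Calder\'on decomposition $|v|=|v|^{1-\theta}|v|^{\theta}$ applied to $v=\epsilon_1v_1+\cdots+\epsilon_nv_n$: since each $v_j$ has $\ell_t$-norm at most $1$ and the supports are disjoint, $\|v\|_{\ell_t}\le n^{1/t}$, whence $\|v\|_{S_{p,r}}\le \|v\|_{\ell_t}^{1-\theta}\|v\|_S^{\theta}\le (1+\varepsilon)^{\theta}n^{1/r}$. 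In particular there is no need to normalize the $x_i$ in $\ell_t$, to normalize the $v_i$ in $S_{p,r}$ (the statement does not ask for it), or to amalgamate separate factors $X$ and $Y$; insisting on those extra constraints is what led you to believe substantial new combinatorial work was required.
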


\begin{proof}  D. Kutzarova and P. K. Lin \cite{KL} proved that there exists vectors $v_1, \ldots, v_n$ in $c_{00}$ which are disjointly supported such that $\|v_1+ \cdots +v_n\|_{S}\leq (1+\epsilon)$, where each $v_j$ is of the form $\frac{m_j}{\log_2(m_j+1)}\sum_{i\in M_j} e_i$, $|M_j|=m_j$.  Since the  basis of $S$ is 1-unconditional, we have $\|\epsilon_1 v_1 + \cdots + \epsilon_n v_n\|_S \leq (1+\epsilon)$ for any $(\epsilon_j)_j^n$ of modulus 1.  

Let $v=  \epsilon_1v_1+ \cdots +\epsilon_nv_n$, it follows from the Lozanovskii  formula that 
$$ \|v\|_{S_{p,r}} \leq \|v\|^{1-\theta}_t \|v\|^{\theta}_S \leq  (1+\epsilon)^{\theta}n^{(1-\theta)/t} = (1+\epsilon)^{\theta}n^{1/r}.
$$
\end{proof}

\begin{pro} Let $1\leq p < r \leq \infty$. The family of Schlumprecht type spaces $S_{p,r}$ and  their  duals are not near Hilbert.  In particular, they are  ergodic spaces.
\end{pro}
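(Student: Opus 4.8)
The plan is to compute $p(S_{p,r})$ and $q(S_{p,r})$ directly from norm estimates on the basis, and then invoke Theorem~\ref{main}. Since near Hilbert means $p(X)=q(X)=2$, it suffices to show that either $p(S_{p,r})\neq 2$ or $q(S_{p,r})\neq 2$, and likewise for the dual; by Maurey--Pisier this amounts to producing, for a suitable $s\neq 2$, finite-dimensional $\ell_s^n$'s uniformly inside the space (or its dual). The natural candidates come from the two families of vectors already at hand: on one side, the initial segments $\sum_{i=1}^n e_i$, whose norm is exactly $n^{1/p}\log_2(n+1)^{1/r-1/p}$ by Proposition~\ref{ScFamily}; on the other side, the disjointly supported blocks $v_1,\dots,v_n$ from Proposition~\ref{ScCotype}, which satisfy $\|\sum\epsilon_i v_i\|\le (1+\epsilon)^\theta n^{1/r}$ together with the lower bound $\|\sum\epsilon_i v_i\|\ge \big(\sum\|v_i\|^r\big)^{1/r}$-type estimate coming from an upper $r$-estimate on disjoint vectors in $S_{p,r}=\ell_t^{1-\theta}S^\theta$ (the space has a natural upper $\ell_r$ structure on disjointly supported vectors once $S$ contributes its asymptotic behaviour). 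These blocks are essentially normalized and span something close to $\ell_r^n$ uniformly, forcing $\ell_r$ to be finitely representable in $S_{p,r}$, hence $p(S_{p,r})\le r$ and $q(S_{p,r})\ge r$. Since $r>p\ge 1$, if $r\neq 2$ we are done immediately.

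First I would make the block estimate precise: using the Lozanovskii/Lozanovski{\u\i} factorization $S_{p,r}=\ell_t^{1-\theta}S^\theta$ with $\theta=\frac1p-\frac1r$, $t=(1-\theta)r$, and the fact that disjointly supported vectors in $S$ obey an upper $\ell_1$ estimate trivially while in $\ell_t$ they obey an exact $\ell_t$ estimate, interpolate to get that disjointly supported vectors in $S_{p,r}$ obey an upper $\ell_r$ estimate. Together with unconditionality of the basis, normalizing the $v_i$ produces a $(1+\epsilon)$-$\ell_r^n$ sequence for every $n$, so $\ell_r$ is finitely representable in $S_{p,r}$. Then I would treat the dual: $S_{p,r}^*$ has a $1$-unconditional basis and $\|\sum_{i=1}^n e_i^*\|_{S_{p,r}^*}$ is computed by duality from the fundamental function, giving $n^{1-1/p}\log_2(n+1)^{\beta}$ for the appropriate exponent $\beta$; the dual blocks (biorthogonal to the $v_i$'s) give, by the same interpolation/duality argument applied to $S^*=\ell_1^{\,\prime}$-type estimates, that $\ell_{r'}$ is finitely representable in $S_{p,r}^*$, so $q(S_{p,r}^*)\ge r'$ and $p(S_{p,r}^*)\le r'$. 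Since $r>p\ge1$ forces $1<r'\le p'<\infty$ with $r'\neq 2$ exactly when $r\neq 2$, in all cases at least one of $S_{p,r}$, $S_{p,r}^*$ has a finite-representability exponent different from $2$.

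The one genuinely special value is $r=2$ (so $1\le p<2$, $\theta=\frac1p-\frac12$, $t=(1-\theta)\cdot2$). Here the block argument only yields $\ell_2$'s, so I must instead exploit the logarithmic factor and the genuinely Schlumprecht-like (non-$\ell_p$) behaviour inherited from $S$. The approach is to show $p(S_{p,2})<2$ by testing type on the segments $\sum_{i=1}^n e_i$: for a space of type $2$ one has $\big\|\sum_{i=1}^n e_i\big\| \le C\big(\mathbb{E}\big\|\sum \pm e_i\big\|^2\big)^{1/2}$, and a standard randomization/Kahane argument bounds the right side in terms of $\big(\sum \|e_i\|^2\big)^{1/2}=\sqrt n$ only up to the modulus of the space; but the segment norm is $n^{1/p}\log_2(n+1)^{1/2-1/p}$, which for $p<2$ grows faster than $C\sqrt n\,\log n^{1/2-1/p}\cdot(\text{something})$ -- more carefully, comparing with the known type/cotype of $S$ itself (Schlumprecht's space has trivial type, being $\ell_1$-saturated in finite dimensions in the relevant sense) and interpolating type indices through $S_{p,2}=\ell_t^{1-\theta}S^\theta$ via the Maurey--Pisier description of $p(\cdot)$ under complex interpolation, one gets $p(S_{p,2})=t'<2$ where $t'$ is strictly below $2$. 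Symmetrically $q(S_{p,2}^*)>2$. \textbf{The main obstacle} is precisely this $r=2$ case: pinning down $p(S_{p,2})$ and $q(S_{p,2})$ requires either the interpolation-of-type-indices computation from \cite{CFM} (which the paper cites and which I would lean on) or a direct randomized estimate on the fundamental function, and one must be careful that the logarithmic factor, not just the power, is what breaks nearness to Hilbert. Once the four indices $p(S_{p,r}),q(S_{p,r}),p(S_{p,r}^*),q(S_{p,r}^*)$ are identified and seen to miss the value $2$, the conclusion "not near Hilbert, hence ergodic" follows verbatim from Theorem~\ref{main}.
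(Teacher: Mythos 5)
Your two basic ingredients are the right ones (the fundamental function $\|\sum_{i=1}^n e_i\|=n^{1/p}\log_2(n+1)^{1/r-1/p}$ and the disjointly supported Kutzarova--Lin blocks), but the way you deploy them leaves real gaps. First, your claim that the blocks ``span something close to $\ell_r^n$ uniformly, forcing $\ell_r$ to be finitely representable in $S_{p,r}$'' does not follow from what you have: Proposition~\ref{ScCotype} gives only an \emph{upper} bound $\|\sum\epsilon_i v_i\|\leq(1+\epsilon)^{\theta}n^{1/r}$, and interpolating a (trivial) upper $\ell_1$ estimate in $S$ with the $\ell_t$ estimate can only ever produce upper estimates, never the matching lower $\ell_r$ estimate you would need for finite representability. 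This matters because your treatment of the case $r<2$ rests entirely on $p(S_{p,r})\leq r$ obtained this way. An upper estimate (together with a lower bound on the individual $\|v_i\|$) is exactly what a \emph{cotype} test consumes, so the legitimate conclusion is $q(S_{p,r})\geq r$ --- which is only informative when $r>2$. Second, your $r=2$ case is resolved only by appeal to an unspecified ``interpolation-of-type-indices computation,'' which is not a proof; and your worry that ``the logarithmic factor, not just the power, is what breaks nearness to Hilbert'' is misplaced, since $p<r$ always forces a polynomial gap somewhere. Third, your plan for the dual (computing the dual fundamental function and dual blocks) is entirely unverified and unnecessary.

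The paper's argument removes all of this with no case split. The type test you reserve for $r=2$ works verbatim for every $r$: if $S_{p,r}$ has type $t$, then by $1$-unconditionality $n^{1/p}\log_2(n+1)^{1/r-1/p}=\|\sum_{i\le n}e_i\|\leq T_t\,n^{1/t}$, and since $1/r-1/p<0$ only affects a logarithmic factor, $t\leq p$; hence $p(S_{p,r})\leq p$. The cotype test on the blocks gives $q(S_{p,r})\geq r$. Since $p<r$, the chain $p(S_{p,r})\leq p<r\leq q(S_{p,r})$ shows $p(S_{p,r})\neq q(S_{p,r})$ in every case (at least one of the two bounds bites, because $p<2$ or $r>2$ must hold), so $S_{p,r}$ is not near Hilbert. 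For the dual one does not recompute anything: near-Hilbertness is self-dual (by the duality of type and cotype for spaces of nontrivial type), so $S_{p,r}^*$ is not near Hilbert either, and Theorem~\ref{main} gives ergodicity of both. If you replace your finite-representability claim by the cotype inequality, run the segment type test for all $r$ rather than only $r=2$, and invoke duality for $S_{p,r}^*$, your argument collapses to the paper's.
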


\begin{proof}Let $1\leq p < r \leq \infty$. Assume that $S_{p,r}$ has type $t$ then  by Proposition \ref{ScFamily},   $n^{1/p}\log_2(n+1)^{1/r-1/p}\leq T_t n^{1/t}$ for some constant $T_t$, and then $t \leq p$. Hence $p(S_{p,r})\leq p$.   Analogously, if $S_{p,r}$ has cotype $t$, then by Propsition \ref{ScCotype} $t\geq r$, and then $q(S_{p,r})\geq r$. We have that $p(S_{p,r})\leq p<r\leq q(S_{p,r})$ and it follows that $S_{p,r}$ is not near Hilbert.  Also, since a Banach space $X$ is near Hilbert if and only if $X^*$ is near Hilbert, the dual space $S_{p,r}^*$ is not near Hilbert.
\end{proof}

\section{Final Remarks}

Of course, the main question concerning ergodic spaces is whether $\ell_2$ is the only non ergodic Banach space.  
The conclusion of Theorem \ref{main} restricts the question of ergodicity to the case of near Hilbert spaces, but our technique uses a reduction  throughout subspaces without AP.    A Banach space in which all of its subspaces have AP  is said  to have the  {\it hereditarily approximation property} (HAP).
Szankowski \cite{S}  proved that every HAP space must be near Hilbert.  The first example of a HAP space not isomorphic to a Hilbert space was constructed by Johnson \cite{J2}. Later,  Pisier  \cite{P}  proved that every weak Hilbert space  has the HAP.  The space constructed by Johnson is asymptotically Hilbertian, and therefore ergodic by the  Anisca \cite{A}  result. In 2010 Johnson and Szankowski \cite{JS2} constructed a HAP space with a symmetric basis but not isomorphic to $\ell_2$, and hence not asymptotically Hilbertian.
Hence, a natural question is the following:

\

{\bf Problem.} Is the  HAP non asymptotically Hilbertian space contructed in \cite{JS2} ergodic?

\

Or more generally:

\

{\bf Problem.}  Is every HAP not isomorphic to the Hilbert space ergodic?

\

Another interesting class  of near Hilbert spaces are the  twisted Hilbert spaces. The most important example of non trivial twisted Hilbert space is  the Kalton-Peck space $Z_2$ \cite{KP}.  $Z_2$ is not asymptotically Hilbertian and it is not known whether has HAP.

\

{\bf Problem.} Does  there exist an ergodic non trivial twisted Hilbert space?

\

Another natural question is:

\

{\bf Problem.} Is every minimal Banach space not isomorphic to a Hilbert space ergodic?

\section{Acknowledgments}

The author thanks Valentin Ferenczi   for his  helpful discussions and  comments on this paper;  Razvan Anisca  and Bill Johnson for their  remarks  and suggestions.





\address

\begin{thebibliography}{99}

\bibitem {AS} G. Androulakis and  T. Schlumprecht,
\emph{The Banach space $S$ is complementably minimal and subsequentially prime,}
Studia Math.  156 (2003), 227--242. 


\bibitem{A}R. Anisca,
\emph{The ergodicity of weak Hilbert spaces,}
Proc. of the Ame. Math. Soc. 138 (4)  (2010), 1405--1413 

\bibitem{AC} R. Anisca and C. Chlebovec,
\emph{Subspaces of $\ell_2(X)$ without  the approximation property}
J. Math. Anal. Appl. 395 (2012), 523--530. 

\bibitem{B} B. Bossard, 
\emph{A coding of separable Banach spaces. Analytic and co-analytic families of Banach spaces,}
Fund. Math. 172 (2002), 117--152.

\bibitem{C} A.P. Calder\'on,
\emph{Intermediate spaces and interpolation, the complex method,}
 Studia Math. 24 (1964), 113--190.
 
\bibitem{CGJ} P. G. Casazza, C. L. Garcia and W. B. Johnson,
\emph{An example of an asymptotically Hilbertian space which fails the approximation property,}
Proc. Amer. Math. Soc. 129 (2001), 3017--3023.

\bibitem{CJT} P. G. Casazza, W. B. Johnson and L. Tzafriri, 
\emph{On Tsirelson's space,}
Israel J. Math. 47 (1984), 81--98.

\bibitem{CKKM} P. G. Casazza, N. J. Kalton, D. Kutzarova and  M. Masty\l{}o, 
\emph{Complex interpolation and complementably minimal spaces,}
 Lecture Notes in Pure and Appl. Math. 175, Dekker, New York, 1996, 135--143.

\bibitem{CFM} J. M. F. Castillo, V. Ferenczi and Y. Moreno,
\emph{On Uniformly Finitely Extensible Banach spaces,}
J. Math. Anal. Appl. 410 (2014) 670--686.

\bibitem{D} A. M. Davie,
\emph{ The approximation problem for Banach spaces},
Bull. London Math. Soc. 5 (1973), 261--266.

\bibitem{E} P. Enflo,
\emph{A counterexample to the approximation property,}
Acta Math. 13 (1973), 309--317.

\bibitem{DFKO} S. Dilworth, V. Ferenczi, D. Kutzarova and  E. Odell, 
\emph{On strongly asymptotic lp spaces and minimality,}
J. Lond. Math. Soc.(2) 75 (2007), 409--419.

\bibitem{F}  V. Ferenczi,
\emph{Minimal subspaces and isomorphically homogeneous sequences in a Banach space,}
Israel J. Math. 156 (2006), 125 --140.

\bibitem{FG} V. Ferenczi and  E. Galego,
\emph{Some equivalence relations which are Borel reducible to isomorphism between separable Banach spaces,}
Israel J. of Math. 152 (2006) 61--82.

\bibitem{FLR} V. Ferenczi, A. Louveau, and C. Rosendal, 
\emph{The complexity of classifying separable Banach spaces up to isomorphism,}
 J. Lond. Math. Soc. (2) 79 (2009), 323--345.

\bibitem{FR} V. Ferenczi and C. Rosendal,
\emph{ Ergodic Banach spaces,}
 Advances in Mathematics 195 (2005), 259--282.
 
\bibitem{FR2} V. Ferenczi and C. Rosendal, 
\emph{On the number of non isomorphic subspaces of a Banach space,}
 Studia Mathematica 168 (2005), 203--216.

\bibitem{Fi} T. Figiel,
\emph{Factorization of compact operators and applications to the approximation problem},
Studia Math. 45 (1973) 191--210.

\bibitem {FS} H. Friedman and  L. Stanley,
\emph{A Borel reducibility theory for classes of countable structures,}
 J. of Symbolic Logic 54 (1989), 894--914.

\bibitem{Gd} W. T. Gowers,  
\textit{An infinite Ramsey theorem and some Banach-space dichotomies,}  
Ann. of Math.  156 (2) (2002), 797--833.

\bibitem{HKL} L. A. Harrington, A. S. Kechris and A. Louveau, 
\emph{A Glimm-Effros dichotomy for Borel equivalence relations,}  
J. Amer. Math. Soc. 3 (1990), 903--928.

\bibitem{J2} W. B. Johnson,
\emph{ Banach spaces all of whose subspaces have the approximation property, }
(Proc. Sem., Ges. Math. Datenverarb., Bonn, 1979), North-Holland, Amsterdam-New York, (1980), 15--26.


\bibitem{J} W. B. Johnson,
\emph{personal communication}


\bibitem{JLS} W. B. Johnson, J. Lindenstrauss and G. Schechtman, 
\emph{Banach spaces determined by their uniform structures,}
Geom. Funct. Anal. 6 (1996), 430--470.


\bibitem{JS} W. B. Johnson and A. Szankowski,
\emph{Complementably universal Banach spaces,}
Studia Math. 58  (1976), 91--97.

\bibitem{JS3} W. B. Johnson and A. Szankowski,
\emph{Complementably universal Banach spaces II,}
J. Funct. Anal. 257 (11) (2009),  3395--3408.


\bibitem{JS2} W. B. Johnson and A. Szankowski,
\emph{Hereditary approximation property,}
 Ann. of Math. 176  (2012), 1987--2001.

\bibitem{KP} N. J. Kalton and  N. T. Peck,
\emph {Twisted sums of sequence spaces and the three space problem,} 
Trans. Amer. Math. Soc. 255  (1979), 1--30.

\bibitem{Ke} A. S. Kechris,
\emph{ Classical Descriptive Set Theory,} Grad. Texts in Math. 156, Springer, New York, 1995.

\bibitem{KTJ} R. Komorowski and  N. Tomczak-Jaegermann,
\textit{Banach spaces without local unconditional structure,} 
Israel J. Math.  89 (1995), 205--226.

\bibitem{KL} D. Kutzarova and  P. K. Lin,
\emph{Remarks about Schlumprecht space,}
Proc. Amer. Math. Soc. 128 (2000), 2059--2068.

\bibitem{LT1} J. Lindenstrauss and  L Tzafriri,
\emph{Classical Banach Spaces I, Sequence  spaces,}
Springer-Verlag, Berlin (1977).

\bibitem{LT} J. Lindenstrauss and  L Tzafriri,
\emph{Classical Banach Spaces II, Function spaces,}
Springer-Verlag, Berlin (1979).


\bibitem{MP}  B. Maurey  and  G. Pisier,
\textit{Series of independent, vector valued random variables and geometric properties of Banach spaces}
Studia Math. 58 (1976), 45--90.

\bibitem{P} G. Pisier, 
\textit{Weak Hilbert spaces,}
Proc. London Math. Soc.  56 (1988), 547--579.


\bibitem{R2} C. Rosendal, 
\emph{``Etude Descriptive de l'Isomorphisme dans la Classe des Espaces de Banach'',}
These de Doctorat de l'Universite Paris 6, 2003.


\bibitem{R} C. Rosendal,
\emph{Incomparable, non-isomorphic and minimal Banach spaces},
Fund. Math. 183 (2004), 253--274.


\bibitem{Si} J.H.Silver,
\emph{Counting the number of equivalence classes of Borel and coanalytic equivalence relations,}
 Ann. Math. Logic 18 (1980), no. 1, 1--28.
 
\bibitem{S} A. Szankowski,
\emph{Subspaces without the approximation property,}
Israel J. of Math. 30 (1978), 123--129.

\bibitem{Sc} T. Schlumprecht, 
\emph{An arbitrarily distortable Banach space,}
Israel J. Math. 76, 1991, 81--95.
\end{thebibliography}
\end{document}